\documentclass{amsart}
\usepackage{verbatim,amssymb,xcolor,rotating,ulem}

\newtheorem{thm}{Theorem}[section]
\newtheorem{prop}[thm]{Proposition}
\newtheorem{cor}[thm]{Corollary}
\newtheorem{lem}[thm]{Lemma}
\newtheorem{rk}[thm]{Remark}
\newtheorem{conject}[thm]{Conjecture}
\newtheorem{ex}[thm]{Example}

\begin{document}

\title[Big groups of automorphisms]{On the biggest purely non-free  conformal actions on compact Riemann surfaces
 and  their asymptotic properties}

\author{C. Bagi\'nski, G. Gromadzki, R. A.  Hidalgo}

\thanks{C. Bagi\'nski, G. Gromadzki  supported by NCN 2015/17/B/ST1/03235.  R. A. Hidalgo supported by FONDECYT 1230001}

\subjclass{Primary 57M60; Secondary 20H10 30F10, 14H37}

\keywords{Automorphisms of Riemann surfaces,  Fuchsian groups,  maximum order problem,  Hurwitz bound, Hurwitz groups, Accola-Maclachlan bound, purely non-free actions}

\maketitle

\begin{abstract}
A continuous action of a finite group  $G$ on a closed orientable surface $X$ is said to
be gpnf (Gilman purely non-free) if  every element of $G$ has a fixed point on $X$.
We prove that the biggest order {$\mu(g)$}, of a gpnf-action on a surface of even genus
$g \geq 2$, is bounded below by $8g$ and that this bound is sharp for infinitely many even $g$ as well.
This provides, for even genera,  a gpnf-action analog of the celebrated  Accola-Maclachlan bound
$8g+8$ for arbitrary finite continuous actions.
We also describe the asymptotic behavior of $\mu$. We define $\mathcal{M}$ as the set of values
of the form $$\widetilde{\mu}(g)=\frac{\mu(g)}{g+1},$$ and its subsets
 $\mathcal{M}_+$ and $\mathcal{M}_-$ corresponding to even and odd genera $g$. We show that
the set $\mathcal{M}_+^d$, of accumulation points of $\mathcal{M}_+$, consists of a single number $8$.
If $g$ is odd, then we prove that $4g \leq \mu(g)<8g$. We conjecture that this lower bound is sharp for infinitely many odd $g$.
Finally, we  prove that this conjecture implies that $4$ is the only element of
$\mathcal{M}_-^d$, leading to $\mathcal{M}^d=\{4,8\}.$
\end{abstract}

\section{Introduction}
Let $X= X_g$ be a closed orientable surface of genus $g \geq 2$ and let  $\nu(g)$ be the biggest
order of a finite group $G$ of orientation-preserving homeomorphisms of $X$. The finiteness of
$G$ guarantees that we can find a Riemann surface structure on $X$ for which $G$ is
a group of its conformal automorphisms. This fact allows us to employ algebraic machinery
based on the Riemann uniformization theorem and the well-developed combinatorial theory of
Fuchsian groups.

\medskip
The classical Hurwitz upper bound \cite{Hur} asserts that 
$\nu (g) \leq 84(g-1)$, while the Accola-Maclachlan \cite{A,M} lower bound asserts $\nu (g)\geq 8(g+1)$.
Both  bounds are known to be attained and not attained for infinitely many $g$.
Numerous articles, devoted to the study of large finite groups of homeomorphisms,
can be seen just as a study of $\nu(g)$, but  precise values are only known for specific
cases and series of $g$. However, determining all values of the function $\nu$ seems to be
an unfeasible task.
Special attention has been  given to finding the genera $g$ for which the Hurwitz
bound is attained, i.e., for which $\nu  (g)=84(g-1)$. Less attention has been paid to
another interesting problem of describing those $g$ for which the Accola-Maclachlan bound is exact,
and obtaining information about the values of $\nu$ for remaining $g$.
In recent papers \cite{BG,G-add} we have completely described the asymptotic properties
of the function  $\nu$.

The function
$\nu = \nu(g)$ can be generalized
to  the function $\nu_{\mathcal G}$
by restricting  the spectrum of groups acting on surfaces to
some special classes
${\mathcal G}$
such as, for example, soluble or nilpotent groups (see e.g. \cite{BEGG,GM,Zom1, Zom2}).
 Alternatively, it can be generalized to  $\nu_{\mathcal G}^{{\mathcal S},{\mathcal P}}`$
assuming that we allow only  actions of groups from ${\mathcal G}$ on surfaces of certain
topological type ${\mathcal S}$
such as, for example, bordered or nonorientable (see e.g. \cite{BEGG, M12,M4})
or actions of groups having certain particular  properties ${\mathcal P}$,
such as, for example, hyperelliptic or $p$-gonal  (see e.g. \cite{BEGG,BCG}).

In this paper, we study $\nu^{\rm gpnf}$ for purely non-free actions of arbitrary groups.
For notational simplicity, we shall  denote this function by $\mu$. Such actions were
introduced by J. Gilman in  \cite{Gil} in her study of adapted bases for the actions induced
on the first integral homology. They are defined as  those in which all elements have fixed
points;  we shall refer to them as to {\it gpnf-actions}. The investigation of gpnf-actions
was initiated in \cite{BGH}, where  we showed that such an action on a closed orientable
surface exists for an arbitrary finite group $G$, and we have solved the minimal genus problem
for abelian groups. The present paper can be seen as a step towards the solution of the
maximal order problem for such actions.

In Theorem \ref{A-M for even gpnf}, we prove, for $g \geq 2$ even, that $\mu(g) \geq 8g$,  and
that this lower bound is sharp for infinitely many even $g$. This implies that there are infinitely
many $g$  for which there are no gpnf-actions of order greater than $8g$.
Additionally, in Theorem \ref{form of mu}, we  obtain  that for the remainder even values of $g$ it holds that
\begin{equation}\label{eq1}
\mu(g) =
\dfrac{8n}{n-4}
(g-1),
\end{equation}
for some positive integers $n>4$. Furthermore, Zelmanov's solution to the Restricted Burnside Problem,
\cite{Z1,Z2}, allows us to conclude  that given $n$, any value  from (\ref{eq1}) may be taken for
at most finitely many $g$, which precisely describes asymptotic behavior of {$\mu$}.  Specifically,
in Theorem \ref{derivative},  we show  that the  set $\mathcal{M}_+^d$
of the accumulation points of the set $\mathcal{M}_{+}$  of values of the  function
 $$
 \widetilde{\mu}(g) = \frac{\mu (g)}{g +1}
 $$
for even genera $g$,  consists of the single number $8$ and so, in particular,
$\{8\} \,\subseteq \,  \mathcal{M}^d$.

The situation for odd $g \geq 3$ is more complex and our results are not as comprehensive (complete instead of comprehensive?). In Subsection 4.2, we
show that $4g \leq \mu(g) < 8g$, for arbitrary $g$.
We conjecture that $\mu (g) =4g$ for infinitely many odd $g$. We show that, if this conjecture holds, then, apart from $8$, the number $4$ is the only other element of the set
$\mathcal{M}^d$ which would give
  $  \mathcal{M}^d =\{4,8\}$.
This demonstrates both, similarities and marked contrast, with
the corresponding result for arbitrary actions
for which we have shown in \cite{BG,G-add} that $(\mathcal{M}^d)^d =\{8,12\}$, within the 
notations adopted to the present paper.

Finally, we note that our results have other interesting features.  Firstly,
thanks to Belyi's Theorem \cite{Bel}, all surfaces realizing the values of
$\mu= \mu^{\rm gpnf}$
can be defined over the field of algebraic numbers. On the other hand, it easily follows
from an old theorem of Singerman \cite{Singerman} that such surfaces can be defined over the reals.
This asserts, due to result of K\"ock and
Singerman \cite{KS}, that such surfaces enjoy  both properties
simultaneously, i.e., they can be defined over the real algebraic numbers. This is a remarkable
fact in the modern study of Grothendieck dessins d'enfants and the inverse Galois problem
\cite{GG}. All of these, combined with the specificity of gpnf-actions, should make their
investigation, from a geometric point of view,  important and fruitful.
In Examples \ref{model 8g} and \ref{model 4g}, we provide geometric models of gpnf-actions of
orders  $8g$ and $4g$ for even and odd genera $g$, respectively.

\section{Preliminaries}
The Nielsen geometrization theorem asserts that a finite group $G$ of homeomorphisms of a closed
surface $S=S_g$ of genus $g \geq 2$ can be seen as a group of automorphisms of a Riemann surface
with respect to some conformal structure on $S$.
On the other hand, the Riemann uniformization theorem allows us to represent  a compact  Riemann surface
$S$  as the  orbit space ${\mathcal H}/\Gamma$ with the conformal structure inherited from the
hyperbolic plane ${\mathcal H}$, where $\Gamma\cong \pi_1(S)$ is a Fuchsian group of signature
$(g;-)$ --- a torsion-free discrete  and cocompact  subgroup of the group of isometries of
${\mathcal H}$. Furthermore, since $\Gamma$ is torsion-free and  due to elementary
theory of coverings, $G$ can be realized as $\Delta/\Gamma$, for some other Fuchsian group $\Delta$.
Precisely, if $\pi: {\mathcal H} \to X={\mathcal H}/\Gamma$ and $\theta : \Delta \to \Delta/\Gamma= G$
denote the canonical projections, then for $t\in G$ and  $x \in X$,
\begin{equation}\label{action}
t(x)=\pi(\delta(y)),
\end{equation}
where $\delta\in \Delta$ and $y \in \mathcal{H}$ are arbitrary elements satisfying
$t=\theta(\delta)$ and $x=\pi(y)$.

A Fuchsian group  $\Delta$ has a presentation
\begin{equation}\label{presentation}
\langle \, a_1, b_1, \ldots , a_h,b_h, x_1, \ldots, x_r \,: \, x_1^{m_1}, \ldots , x_r^{m_r},  x_1 \ldots x_r [a_1, b_1] \ldots  [a_h,b_h] \,  \rangle,
\end{equation}
which is concisely expressed in its signature
\begin{equation}\label{signature}
(h;m_1, \ldots, m_r).
\end{equation}

Within this terminology, a surface group  $\Gamma$  has signature $(g;-)$.

The hyperbolic area of an arbitrary fundamental region of the Fuchsian group $\Delta$ is 
\begin{equation}\label{area}
m(\Delta) = 2 \pi \left(2h-2+ \sum_{i=1}^r \left(1 - \frac{1}{m_i}\right)\right),
\end{equation}
and for a subgroup $\Delta' $ of $\Delta$ of finite index, we have the following Riemann-Hurwitz  formula
\begin{equation}\label{RH}
 [\Delta: \Delta'] = \dfrac{m (\Delta')}{m (\Delta)}.
\end{equation}
The signature $(h;m_1,\dots,m_r)$ in the case $h=0$ will be written shorter as
$(m_1,\dots,m_r)$.
A useful property of $\Delta$ is that any of its nontrivial  elements of finite order is
conjugated in $\Delta$ to a power of some $x_i$. Moreover,  elements of finite order are the
only ones that have a (unique) fixed point in $\mathcal H$.
Therefore, we can easily derive the following two lemmas.

\begin{lem}\label{smooth}
Let $G$ be a finite group and  let $\Delta$ be a Fuchsian group with signature
$(\ref{signature})$. Then an epimorphism  $\theta : \Delta \to G$  has a torsion-free kernel
if and only if it preserves the orders of $x_i$. In such a case, $\ker \theta$  is isomorphic
to the fundamental group of a closed orientable surface whose genus is given directly
by $(\ref{area})$ and  $(\ref{RH})$.
\end{lem}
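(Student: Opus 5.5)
The plan is to show both implications using the two facts recalled just before the lemma. First, every nontrivial element of finite order in $\Delta$ is conjugate to a power of some $x_i$. Second, $\Delta$ contains elements of finite order, and these are exactly the elements with a fixed point in $\mathcal H$. Write $K=\ker\theta$.

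For necessity, suppose $K$ is torsion-free and let $o(\theta(x_i))=d$. Since $x_i$ has order $m_i$ in $\Delta$ and $\theta(x_i^{m_i})=1$, we have $d\mid m_i$. Also $x_i^d\in K$, and $x_i^d$ has finite order. Torsion-freeness of $K$ forces $x_i^d=1$, so $m_i\mid d$ and hence $d=m_i$.

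For sufficiency, suppose $\theta$ preserves the orders of all $x_i$, and let $\delta\in K$ have finite order with $\delta\neq 1$. Then $\delta=w x_i^k w^{-1}$ for some $w\in\Delta$, some $i$, and some $k$ with $x_i^k\neq 1$, that is $m_i\nmid k$. Applying $\theta$ gives
\[
1=\theta(w)\theta(x_i)^k\theta(w)^{-1},
\]
so $\theta(x_i)^k=1$. Since $\theta(x_i)$ has order $m_i$, this gives $m_i\mid k$, a contradiction. Hence $K$ is torsion-free.

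For the second assertion, $K$ has finite index $|G|$ in $\Delta$, so it is again a cocompact Fuchsian group. Because $K$ is torsion-free, the structure theorem (\ref{presentation}) leaves no elliptic generators, so $K$ has signature $(g;-)$. It is therefore isomorphic to $\pi_1$ of the closed orientable surface $\mathcal H/K$ of genus $g$. Formula (\ref{area}) gives $m(K)=2\pi(2g-2)$, and (\ref{RH}) gives $2g-2=|G|\,m(\Delta)/2\pi$. This determines $g$ directly from the signature (\ref{signature}) and $|G|$.

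Every step is routine. The only point needing care is the sufficiency direction, where the conjugacy fact must be applied with $x_i^k\neq 1$ genuinely nontrivial, so that $m_i\nmid k$.
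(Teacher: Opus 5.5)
Your proof is correct and is essentially the argument the paper has in mind. The paper gives no separate proof: it says only that the lemma follows easily from the fact that every nontrivial torsion element of $\Delta$ is conjugate to a power of some $x_i$, together with (\ref{area}) and (\ref{RH}), and that is exactly what you carry out in both directions and in the genus computation.
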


An epimorphism described in the above lemma is said to be a {\it smooth} or a {\it surface-kernel}
epimorphism, and a corresponding finite group $G$ will be called a $(h;m_1, \ldots , m_r)$-{\it group}
or a $(m_1, \ldots , m_r)$-{\it group} if $h=0$.

\begin{lem}\label{elliptic}
An element  $t \in G$   has a fixed point in $X$ if and only if it is conjugated in $G$ to a power
of $\theta(x_i)$ for some $i$.
\end{lem}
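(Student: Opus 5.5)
The plan is to use the action formula (\ref{action}) and reduce the question of fixed points on $X$ to fixed points in $\mathcal H$ of elements of $\Delta$. Let $t\in G$. Then $t$ fixes $x=\pi(y)\in X$ exactly when, for some (equivalently any) $\delta\in\Delta$ with $\theta(\delta)=t$, we have $\pi(\delta(y))=\pi(y)$. This means $\delta(y)=\gamma(y)$ for some $\gamma\in\Gamma=\ker\theta$, so $\gamma^{-1}\delta$ fixes $y\in\mathcal H$. Since $\theta(\gamma^{-1}\delta)=t$, we get the following reformulation: $t$ has a fixed point on $X$ if and only if some preimage $\delta'\in\theta^{-1}(t)$ has a fixed point in $\mathcal H$. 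Conversely, if a preimage $\delta'$ fixes $y$, then $t(\pi(y))=\pi(\delta'(y))=\pi(y)$ by (\ref{action}).

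Next I would invoke the two facts recalled just before Lemma \ref{smooth}. First, the only elements of $\Delta$ with a fixed point in $\mathcal H$ are the identity and the elements of finite order. Second, every nontrivial element of finite order is conjugate in $\Delta$ to a power of some $x_i$.

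Suppose $t$ has a fixed point. Then some preimage $\delta'$ is either trivial or of the form $\delta'=\eta x_i^k\eta^{-1}$. Applying $\theta$ gives $t=\theta(\eta)\theta(x_i)^k\theta(\eta)^{-1}$, so $t$ is conjugate in $G$ to a power of $\theta(x_i)$. The trivial case $t=1$ is covered by taking $k=0$, and the identity fixes every point anyway.

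Conversely, suppose $t=s\,\theta(x_i)^k s^{-1}$. Choose $\eta\in\Delta$ with $\theta(\eta)=s$, which is possible since $\theta$ is onto. Then $\delta'=\eta x_i^k\eta^{-1}$ is a preimage of $t$, and it fixes the point $\eta(p_i)$, where $p_i$ is the fixed point of $x_i$ in $\mathcal H$. By the first paragraph, $t$ fixes $\pi(\eta(p_i))$.

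The only delicate step is the first one: showing that a fixed point of $t$ on $X$ lifts to a fixed point of some preimage, rather than of a particular chosen one. This is handled by absorbing the deck transformation $\gamma\in\Gamma$ into the preimage, which does not change its image under $\theta$. Everything else is a direct application of the stated properties of Fuchsian groups. No smoothness assumption on $\theta$ is needed, although in the paper's setting $\Gamma$ is torsion-free so that $X$ is a surface.
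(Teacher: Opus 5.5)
Your proposal is correct and follows essentially the paper's argument: absorb an element of $\Gamma=\ker\theta$ into the preimage of $t$ to get an element of $\Delta$ with a fixed point in $\mathcal H$, then use that such elements are conjugate in $\Delta$ to powers of the $x_i$ and apply $\theta$. You also spell out the converse direction (a conjugate of $\theta(x_i)^k$ fixes $\pi(\eta(p_i))$), which the paper's proof leaves implicit.
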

\begin{proof}
Indeed, by (\ref{action}),
$t(x)=x$ if and only if  $\pi (\delta(y))=\pi(y)$, so $\gamma (\delta(y))=y$ for
some $\gamma \in \Gamma$. This implies that $\gamma\delta$ has a fixed point, so
$\gamma\delta = \delta_ix_i^{n_i}\delta_i^{-1}$ for some $\delta_i\in \Delta$ and $n_i<m_i$.
Since $\theta(\gamma)=1$ and $\theta(\delta)=t$, the assertion follows.
\end{proof}

Recall also, after the Introduction, that a group $G$ of self-homeomorphisms of a surface $X$
is said to be a gpnf-action if each $g \in G$ has a fixed point. Lemma  \ref{elliptic}
means, in particular, that

\begin{cor}\label{qp-gpnf}
An action of $G$ defined by a smooth epimorphism $\theta: \Delta \to G$, where $\Delta$ is a
Fuchsian group with signature $(\ref{signature})$, has the gpnf property if and only if
each element
of $G$ is conjugated in $G$ to a power of some $\theta (x_i)$.
\end{cor}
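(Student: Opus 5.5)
The corollary should follow from Lemma \ref{elliptic} applied element by element, with the gpnf definition just unpacked. By definition, the action of $G=\Delta/\Gamma$ on $X=\mathcal{H}/\Gamma$, where $\Gamma=\ker\theta$ is torsion-free because $\theta$ is smooth, has the gpnf property exactly when every $t\in G$ has a fixed point on $X$. Lemma \ref{elliptic} says that a given $t\in G$ has a fixed point on $X$ if and only if $t$ is conjugate in $G$ to a power of $\theta(x_i)$ for some $i$. Quantifying over all $t\in G$ then gives the stated equivalence directly.

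I would carry this out in the following order.

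\emph{Step 1.} Confirm that Lemma \ref{elliptic} applies under the hypotheses of the corollary. This needs $\theta$ to be smooth, so that $X=\mathcal{H}/\ker\theta$ is a closed surface and the action is given by (\ref{action}). Lemma \ref{smooth} supplies exactly this.

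\emph{Step 2.} Prove the forward direction. If every element of $G$ has a fixed point, the ``only if'' part of Lemma \ref{elliptic} shows that each $t$ is conjugate to some power $\theta(x_i)^{k}$.

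\emph{Step 3.} Prove the converse. If $t=s\,\theta(x_i)^k s^{-1}$, take $\delta=\sigma x_i^k\sigma^{-1}\in\Delta$ with $\theta(\sigma)=s$, and let $y$ be its fixed point in $\mathcal{H}$. Then (\ref{action}) gives $t(\pi(y))=\pi(\delta(y))=\pi(y)$, so $t$ has a fixed point. The identity element can be written as $\theta(x_i)^0$, or noted as fixing everything; either way it needs no separate treatment.

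I expect no real obstacle here. The only point needing care is the ``if'' direction of Lemma \ref{elliptic}, since its written proof only argues one implication. I would supply the short computation in Step 3 explicitly so that both directions are covered.
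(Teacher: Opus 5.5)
Your proposal is correct and matches the paper, which presents this corollary as an immediate consequence of Lemma \ref{elliptic} applied to every $t\in G$. Your Step 3 usefully makes explicit the ``if'' direction of Lemma \ref{elliptic}, which the paper's written proof of that lemma leaves implicit: you lift $t$ to the elliptic element $\sigma x_i^k\sigma^{-1}$ and push its fixed point down via (\ref{action}).
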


Let us mention also a  method of relating presentations of a Fuchsian group  $\Delta$ and its
subgroup $\Delta'$ of finite index in terms of the regular representation of $\Delta'$ on the
set of cosets $\Delta/\Delta'$  proposed by Singerman in \cite{Sin}.
Its particular case, which we shall use, is given in the following lemma.

\begin{lem}\label{sing}
Let $\Delta$ be a Fuchsian group with signature $(\ref{signature})$  and
let $\theta: \Delta \to G$ be an epimorphism onto a finite group $G$ of order $N$.
Then each period $m_i$ of $\Delta$   give rise to  $N/n_i$ periods $m_i/n_i$ of the
group $\Delta' = \ker \theta$, where $n_i$ is the order of $\theta(x_i)$. The orbit
genus of $\Delta' $ is determined in such case by the Riemann-Hurwitz formula.
\end{lem}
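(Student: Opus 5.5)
This lemma is a restatement of Singerman's permutation-theoretic description of subgroups of Fuchsian groups, specialized to normal subgroups. The plan is to read the periods of $\Delta'=\ker\theta$ off the fixed points of its elliptic elements in $\mathcal H$.

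\textbf{Identifying the elliptic points.} A point $y\in\mathcal H$ has nontrivial stabilizer in $\Delta$ exactly when it is a fixed point of some elliptic element. Every maximal finite (cyclic) subgroup of $\Delta$ is conjugate to a unique $\langle x_i\rangle$. So the $\Delta$-orbits of elliptic points correspond to the periods $m_i$, and the stabilizer of the point $y_i$ fixed by $x_i$ is $\langle x_i\rangle\cong \mathbb Z_{m_i}$.

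\textbf{Periods contributed by one $\Delta$-orbit.} The periods of $\Delta'$ correspond to $\Delta'$-orbits of points with nontrivial $\Delta'$-stabilizer. The $\Delta$-orbit of $y_i$ splits into $\Delta'$-orbits indexed by the double cosets $\Delta'\backslash\Delta/\langle x_i\rangle$. Since $\Delta'$ is normal, these are just the cosets of $\Delta'\langle x_i\rangle$ in $\Delta$, and there are
$$[\Delta:\Delta'\langle x_i\rangle]=\frac{N}{|\theta(\langle x_i\rangle)|}=\frac{N}{n_i}$$
of them. Now take a point $\delta(y_i)$. Its stabilizer in $\Delta'$ is
$$\Delta'\cap \delta\langle x_i\rangle\delta^{-1}=\delta(\Delta'\cap\langle x_i\rangle)\delta^{-1},$$
again using normality. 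The intersection $\Delta'\cap\langle x_i\rangle$ consists of the powers $x_i^k$ with $\theta(x_i)^k=1$, that is, it equals $\langle x_i^{n_i}\rangle$, of order $m_i/n_i$. Hence each of these $N/n_i$ orbits yields a period $m_i/n_i$ of $\Delta'$. Orbits with $n_i=m_i$ give trivial stabilizers and contribute nothing, which is consistent with Lemma \ref{smooth}.

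\textbf{Orbit genus.} There are no other elliptic points, so the signature of $\Delta'$ is $(h';\ldots)$ with exactly these periods. The genus $h'$ is then forced by equating $m(\Delta')=N\,m(\Delta)$ via (\ref{area}) and (\ref{RH}).

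The only step needing care is the double coset count together with the identification of each stabilizer. Normality of $\Delta'$ is what makes all the stabilizers conjugate and of the same order $m_i/n_i$, so it is essential there. Everything else is routine.
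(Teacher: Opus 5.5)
Your proof is correct. The paper gives no argument of its own. It presents the lemma as the normal-subgroup case of Singerman's theorem, where the periods of a finite-index subgroup are read off from the cycle structure of the permutations that the $x_i$ induce on its cosets. For $\Delta'=\ker\theta$ this permutation representation is the regular representation of $G$. There $\theta(x_i)$ is a product of $N/n_i$ disjoint $n_i$-cycles, and each cycle contributes a period $m_i/n_i$.

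Your double cosets $\Delta'\backslash\Delta/\langle x_i\rangle$ are exactly the $\langle x_i\rangle$-orbits on $\Delta'\backslash\Delta$, i.e.\ those cycles. Your stabilizer computation is what turns a cycle of length $n_i$ into a period $m_i/n_i$. So you have in effect proved this special case of Singerman's theorem from first principles.

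The citation buys the general non-normal statement, where different $\Delta'$-orbits inside one $\Delta$-orbit can carry different periods. Your route is self-contained and shows exactly where normality enters: it turns the double cosets into cosets of $\Delta'\langle x_i\rangle$, and it makes every stabilizer conjugate to $\langle x_i^{n_i}\rangle$. You take for granted that $\langle x_i\rangle$ is the full stabilizer of its fixed point, and that the $\langle x_i\rangle$ are pairwise non-conjugate and represent every maximal finite subgroup up to conjugacy. Both are standard for canonical generators, so nothing is missing.
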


Finally,  to avoid misleading conclusions, we also make the following

\begin{rk}\label{g-p vs action}\rm
Throughout the  paper,  apart from the concept of  gpnf-action,  we shall use  also the  technical
concept of a  {\it gpnf-group}.   Namely, we say that a group
$G$ is a  {\it  gpnf  $(m_1, \ldots, m_r)$-group} or a {\it gpnf-group of type $(m_1, \ldots , m_r)$}
if it is generated by elements  $a_1, \ldots , a_r$  of orders $m_1, \ldots, m_r$,
respectively, such that $a_1 \ldots  a_r=1$  and each element  of $G$ is conjugated in $G$ to a
power of $a_i$.
These two concepts coincide if and only if
$$
-2+ r - (1/m_1 + \ldots  + 1/m_r)
$$
is positive. Observe however that in the other case, the concept of gpnf
$(m_1, \ldots, m_r)$-group or gpnf-group of type $(m_1, \ldots, m_r)$
still works; the alternating  group $A_5$ is a gpnf-group of
type $(2,3,5)$ but there are no  gpnf $(2,3,5)$-action  on a surface of genus $g\geq 2$;
the only such action is the one topologically equivalent to the action on the
boundary of a solid dodecahedron which is a surface of genus $1$.
\end{rk}

\section{There are no gpnf  $(2,3,n)$-actions}
We begin this section with  a brief  introduction to the theory of so-called
{\it prime graphs}  of a finite group $G$, called also {\it the Gruenberg-Kegel graphs}
and traditionally denoted by ${\rm GK}(G)$.
Given a positive integer $n$, let $\pi(n)$ denotes the set of all primes dividing $n$.
The prime graph ${\rm GK}(G)$ of a finite group $G$ is a graph whose set of vertices is equal
to $\pi(|G|)$ and two primes $r,s\in\pi(|G|)$ are adjacent, denoted by $r \sim s$,
if there exists an element of order $rs$.
By a {\it clique} we mean a graph in which every pair of vertices is adjacent.
Following \cite{VV1}, we denote by $t(G)$  the independence number of ${\rm GK}(G)$,
which is the maximum cardinality of an anticlique of ${\rm GK}(G)$,  understood
as a subgraph in which no two vertices are adjacent. In \cite{VV1}, the number $t(G)$
is calculated for simple groups $G$, and examples of anticliques $\rho(G)$ of cardinality
 $t(G)$ are given. It is shown, in particular, that  $t(G)\geq 2$
for any simple group $G$. For our purpose we need to identify all finite simple groups for
which $t(G)=2$.  We list them in Table \ref{t(G)=2}, where
$q$ denotes a power of a prime $p$, which stands for
the order of a field over which a simple finite group of Lie type is considered.
For an integer $a$, the value  $a_r$ for  a prime $r$ denotes the maximal power of $r$
dividing $a$. Finally, by  $r_m$ it is meant  a  prime for which
$$
q^m\equiv 1\pmod{r_m}
$$
and $m$ is the smallest integer with this property.
\begin{table}
\begin{tabular}{cccc}
\hline
&&\\[-2mm]
\;\;\;\;${\rm No} \;\;\;\; $ &{\rm Group} & \;\;\;\;$ \rho(G)$\;\;\;\;\\ [4pt]
\hline
&&\\[-7.1pt]
1 & $J_2$ & $\{5,7\}$ \\[2.1pt]
2 & \;\;\;\;\;\;\;\;\;\;\;\;\;\;\;\;\; $A_2(q),\ (q-1)_3\neq 3,\ q+1=2^k$ \;\;\;\;\;\;\;\;\;\;\;\;\;\;\;\;\; & $\{p,r_3\}$   \\[2.1pt]
3 & $ ^2A_2(q),\ (q+1)_3\neq 3,\ q-1=2^k$ & $\{p, r_6\}$ \\[2.1pt]
4 & $A_3(2)$ & $\{2,5\}$  \\[2.1pt]
5 & $B_2(q),\ q>2$ & $\{p, r_4\}$ \\[2.1pt]
6 & $ C_3(2) $ & $ \{5,7\} $ \\[2.1pt]
7 & $ D_4(2) $ & $ \{5,7\} $ \\[2.1pt]
8 & $ ^3D_4(2) $ & $ \{2,13\} $\\[4.1pt]
\hline
\end{tabular}\\[2mm]
\caption{Simple groups with $t(G)=2$. \label{t(G)=2}}
\end{table}

\medskip
We shall also need the following well-known results. The first one is a characterization
of finite simple groups with elementary abelian Sylow  $2$-subgroups by Walter \cite{Walter}.

\begin{lem}\label{elementary abelian_2_groups}
If $G$ is a finite simple group with elementary abelian Sylow $2$-subgroups, then $G$ is
isomorphic to one of the groups
\begin{itemize}
	\item[{\rm (a)}] $A_1(q)={\rm PSL}(2,q)$, $q = 2^n$, $n \geq 2$, or $q \equiv 3,5 \pmod{8}$,\vspace{0.4pt}
	\item[{\rm (b)}] Janko's first group $J_1$,\vspace{0.4pt}
	\item[{\rm (c)}] A  Ree group $^2G_2(3^{2n+1})$ for some $n$.\hfill $\qed$
\end{itemize}
\end{lem}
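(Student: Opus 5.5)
The statement is Walter's classification of simple groups with abelian Sylow $2$-subgroups, specialised to the elementary abelian case. I would not reprove it from scratch but invoke \cite{Walter}: his theorem lists the simple groups with abelian Sylow $2$-subgroups as ${\rm PSL}(2,q)$ with $q\equiv 3,5\pmod 8$ or $q=2^n$, $J_1$, the Ree groups $^2G_2(3^{2n+1})$, and ${\rm PSL}(2,q)$ with $q\equiv \pm 1 \pmod 8$ having abelian Sylow $2$-subgroups. The reduction to the present statement is then a direct check of which groups in that list have \emph{elementary} abelian Sylow $2$-subgroups.

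The check goes family by family. For $q$ odd, a Sylow $2$-subgroup of ${\rm PSL}(2,q)$ is dihedral of order $(q^2-1)_2/2$. It is abelian only when this order is $4$, i.e. $q\equiv 3,5\pmod 8$, and then it is Klein four, hence elementary. So the extra family with $q\equiv\pm1\pmod 8$ does not actually have abelian Sylow $2$-subgroups and drops out. For $q=2^n$ the Sylow $2$-subgroup of ${\rm PSL}(2,q)={\rm SL}(2,q)$ is the unipotent radical, isomorphic to the additive group of $\mathbb F_q$, which is elementary abelian; we need $n\ge 2$ for simplicity. For $J_1$ the Sylow $2$-subgroup is $(\mathbb Z_2)^3$. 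For the Ree groups it is likewise $(\mathbb Z_2)^3$, since the involution centraliser is $\mathbb Z_2\times{\rm PSL}(2,q)$. This yields exactly the list (a)--(c).

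If one wanted to indicate the content of Walter's argument, the plan would be as follows. Let $S$ be an abelian Sylow $2$-subgroup of $G$. By Burnside's transfer theorem, fusion in $S$ is controlled by $N_G(S)$, and Glauberman's $Z^*$-theorem rules out isolated involutions. One then analyses $C=C_G(t)$ for an involution $t$ and shows that $C/O(C)$ has the form $S\times L$ or a closely related form, with $L$ either trivial or ${\rm PSL}(2,q)$. The case of $2$-rank $2$ (so $S$ Klein four, which is dihedral) is handled by the Gorenstein--Walter classification of groups with dihedral Sylow $2$-subgroups, giving ${\rm PSL}(2,q)$ with $q\equiv 3,5\pmod 8$ or $A_7$. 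The group $A_7$ is excluded here because its Sylow $2$-subgroups are dihedral of order $8$, which is not abelian. The case of a strongly embedded subgroup is handled by Bender's theorem, giving ${\rm PSL}(2,2^n)$. The remaining case leads, via Janko's and Ward's characterisations by involution centralisers, to $J_1$ and to the Ree groups.

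The main obstacle in that deep route is eliminating the cores $O(C_G(t))$ and pinning down the centraliser structure, which is the heart of Walter's long paper. For our purposes this step is simply taken from \cite{Walter}, so only the elementary verification above remains.
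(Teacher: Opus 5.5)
Your proposal is correct and takes the same approach as the paper, which simply quotes Walter's classification with no further argument. One correction: Walter's list has no extra family of ${\rm PSL}(2,q)$ with $q\equiv\pm1\pmod 8$ (you rightly note such a family would be empty). Since elementary abelian implies abelian, the lemma follows from Walter's theorem at once, and your family-by-family check only verifies the converse, namely that every listed group really has elementary abelian Sylow $2$-subgroups.
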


The second result is a characterization of finite simple groups without elements
of order six by Fletcher, Stellmacher, Stewart \cite{FSS}.

\begin{lem}\label{elements of order 6}
Let $G$ be a finite simple group that has no elements of order six. Then $G$ is isomorphic to one of the following groups:
\begin{enumerate}
\item[{\rm (a)}] $A_1(q)={\rm PSL}(2,q)$, $q \neq \pm 1 \pmod{12}$,\vspace{0.4pt}
\item[{\rm (b)}] $A_2(2^n)=PSL_3(2^n)$, $n\geq 2$, $n\not\equiv 0 \pmod{6}$,\vspace{0.4pt}
\item[{\rm (c)}] $^2A_2(2^n)=PSU(3, 2^n)$, $n\geq 2$, $n\not\equiv 3, 5 \pmod{6}$,\vspace{0.4pt}
\item[{\rm (d)}] $Sz(2^{2n+1})$, $n \geq 1$. \hfill $\qed$
\end{enumerate}
\end{lem}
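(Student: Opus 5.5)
The plan is to argue through the Classification of Finite Simple Groups, going family by family. The guiding principle is that an element of order six is the same as a commuting pair consisting of an involution $u$ and an element $s$ of order $3$. So in every case we look for a $3$-element in the centralizer $C_G(u)$ of some involution $u$. The proof has two halves:
\begin{itemize}
\item[(i)] show that every simple group outside the list (a)--(d) contains such a pair;
\item[(ii)] show that the groups in the list contain none.
\end{itemize}

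\emph{Sporadic and alternating groups.} The $26$ sporadic groups are handled by inspecting the element orders in the ATLAS; each of them has an element of order $6$. For alternating groups, $A_n$ with $n\ge 7$ contains $(12)(34)(567)$, which has order $6$. The remaining cases are $A_5\cong{\rm PSL}(2,4)$ and $A_6\cong {\rm PSL}(2,9)$, both of which are in list (a).

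\emph{Groups of Lie rank one.} Here everything follows from the known element orders.
\begin{itemize}
\item ${\rm PSL}(2,q)$: every element has order $p$ or divides $(q\pm1)/d$, where $d=\gcd(2,q-1)$.
\begin{itemize}
\item If $q$ is even, the divisors of $q\pm1$ are odd, so there is no element of order $6$.
\item If $q=3^k$, then $3\nmid q\pm1$, so again there is none.
\item If $q$ is odd and prime to $3$, an element of order $6$ exists exactly when $12\mid q-1$ or $12\mid q+1$. This is the condition $q\equiv\pm1\pmod{12}$ that list (a) excludes.
\end{itemize}
\item $Sz(2^{2n+1})$: the element orders are known to be $2$, $4$, and odd divisors of $q-1$ and $q\pm\sqrt{2q}+1$. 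Hence there is no element of order $6$, which accounts for (d).
\item ${}^2G_2(3^{2n+1})$: the centralizer of an involution is $\langle u\rangle\times{\rm PSL}(2,q)$, which contains elements of order $3$. So these groups are excluded.
\end{itemize}

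\emph{Groups of Lie rank at least two.}
\begin{itemize}
\item In odd characteristic, the group contains a subsystem subgroup of the form $SL_2(q)\circ SL_2(q)$, or a Levi subgroup, whose central involution $u$ commutes with an $SL_2(q)$ factor. Since $|SL_2(q)|=q(q^2-1)$ is divisible by $3$, this produces a $3$-element commuting with $u$.
\item In characteristic $2$, take $u$ to be a long root element. Its centralizer is $Q\rtimes L$ with $L$ a Levi factor. As soon as the rank is at least $3$, or the type is $B_2$, $G_2$, ${}^3D_4$ or ${}^2F_4$, the factor $L$ contains an $SL_2(q)$ or ${\rm PSL}_3$-type subgroup. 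Since $3\mid q^2-1$, this gives an element of order $6$.
\item What remains is ${\rm PSL}_3(q)$ and ${\rm PSU}_3(q)$ with $q=2^n$. There $C_G(u)$ is a unipotent group extended by a torus image. In $SL_3$ that torus is $\{{\rm diag}(a,a^{-2}\cdot a,\ldots)\}$, of order $q-1$; in $SU_3$ it has order $q+1$. The torus is then reduced modulo the center of order $\gcd(3,q\mp1)$.
\end{itemize}

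\emph{The main obstacle.} I expect the hardest step to be this last computation: deciding exactly when the centralizer of a transvection in ${\rm PSL}_3(2^n)$ or ${\rm PSU}_3(2^n)$ still contains a $3$-element after the center is factored out. This requires tracking the $3$-part of $q\mp1$ against the order $3$ of the center. I would do it with explicit matrices. Take the transvection $u=I+E_{13}$ and a diagonal $s={\rm diag}(\lambda,\lambda^{-2},\lambda)$, and ask when $s$ has order exactly $3$ modulo scalars. This condition reduces to a congruence on the $3$-adic valuation of $2^n\mp1$, which should yield the residue conditions on $n$ modulo $6$ stated in (b) and (c). For $n=1$ the groups are not simple or are already ${\rm PSL}(2,7)$, which is consistent with requiring $n\ge 2$.

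Finally, to complete (ii), I will verify that the listed groups have no other route to an element of order $6$. All their involutions are conjugate to root elements, so the centralizer computation above is exhaustive.
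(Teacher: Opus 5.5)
The paper does not prove this lemma; it is quoted from \cite{FSS}, so your CFSG case check would be an independent route. The real problem is the step you defer. Carried out for the unitary groups, the computation does \emph{not} give condition (c). For $q=2^n$, the centralizer of the transvection $u$ in $SU_3(q)$ is $q^{1+2}{:}\{\mathrm{diag}(\lambda,\lambda^{-2},\lambda):\lambda^{q+1}=1\}$, of order $q^3(q+1)$. Its image $q^{1+2}{:}C_{(q+1)/\gcd(3,q+1)}$ is the full centralizer of $\bar u$ in $PSU_3(q)$, since no element conjugates the unipotent $u$ to a nontrivial scalar multiple of itself. All involutions are conjugate to $\bar u$, so $PSU_3(2^n)$ has an element of order $6$ if and only if $9\mid 2^n+1$, i.e.\ iff $n\equiv 3 \pmod 6$.

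For $n\equiv 5\pmod 6$ there is no such element. In $PSU_3(32)$ one has $q+1=33$, and the involution centralizer is $2^{15}{:}11$. Since $PSU_3(32)$ is not on the list, half (i) of your plan cannot be completed. In fact the lemma as printed is false for $n\equiv 5\pmod 6$; the correct condition in (c) is $n\not\equiv 3\pmod 6$. The same computation in $PSL_3(2^n)$ gives $9\mid 2^n-1\iff 6\mid n$, which does agree with (b). Note also that $(2^n+1)_3=3$ for both $n\equiv 1$ and $n\equiv 5 \pmod 6$, so no argument can separate these two residues. Your sentence that the $3$-adic computation ``should yield the residue conditions \dots\ stated in (b) and (c)'' is exactly where the proof breaks. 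Doing that computation would have exposed the misprint.

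There is also a smaller, repairable error in your characteristic-$2$ step. For ${}^2F_4(q)$, the long root involutions (those central in a Sylow $2$-subgroup) have centralizer $[q^{10}]{:}Sz(q)$, of order $q^{12}(q-1)(q^2+1)$. This is prime to $3$, so that Levi factor contains no $SL_2(q)$ as you claim. You must instead use the other involution class, whose centralizer has order $q^{10}(q^2-1)$, or the subgroup $Sp_4(q)<{}^2F_4(q)$. You must also treat the Tits group ${}^2F_4(2)'$ separately. In odd characteristic, you should likewise check that the involution and $3$-element you choose are not absorbed into the centre of the quasisimple cover. With these repairs your method proves the corrected lemma. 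Part (ii) is not needed for the one-directional implication as stated.
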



\begin{lem}\label{cliques}
	If $G$ is a $(p,q,n)$-group with a gpnf-action, where $p,q$ are primes, then all
    connected components of ${\rm GK}(G)$ are cliques. In particular, there are at most $3$
    such components and then $t(G)\leqslant 3$.
\end{lem}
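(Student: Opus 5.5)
By Corollary \ref{qp-gpnf}, the whole argument rests on one observation. If $G$ is a $(p,q,n)$-group with a gpnf-action given by $\theta$, then with $a=\theta(x_1)$, $b=\theta(x_2)$, $c=\theta(x_3)$ of orders $p,q,n$, every element of $G$ is conjugate to a power of $a$, $b$ or $c$. So every element of $G$ lies in a conjugate of one of the three cyclic subgroups $\langle a\rangle$, $\langle b\rangle$, $\langle c\rangle$. My plan is to read off the prime graph from this and then count components.

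\textbf{Step 1 (possible element orders).} Since $p$ and $q$ are primes, every element of $G$ has order $1$, $p$, $q$, or a divisor of $n$. Hence the set of element orders is $\{1,p,q\}\cup\{d: d\mid n\}$. The order of $G$ also has prime divisors drawn only from $\{p,q\}\cup\pi(n)$, by Cauchy's theorem: an element of prime order $r$ exists for every $r$ dividing $|G|$, and it must be conjugate to a power of $a$, $b$ or $c$.

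\textbf{Step 2 (edges and cliques).} Two primes $r\neq s$ are adjacent in ${\rm GK}(G)$ exactly when some element has order $rs$. By Step 1, such an element must be conjugate to a power of $c$, because powers of $a$ and $b$ have prime or trivial order. So $rs\mid n$. Conversely, if $r,s\in\pi(n)$ with $r\neq s$, then $c$ has order $n$ and $rs\mid n$, so the power $c^{n/(rs)}$ has order $rs$ and $r\sim s$. Therefore $\pi(n)$ spans a clique, and every edge of ${\rm GK}(G)$ lies inside $\pi(n)$.

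\textbf{Step 3 (the components).} The vertices outside $\pi(n)$ are at most $p$ and $q$. If such a vertex is not in $\pi(n)$, Step 2 shows it has no edges, so it forms a one-vertex component, which is trivially a clique. If $p$ or $q$ lies in $\pi(n)$, it belongs to the clique on $\pi(n)$. So the components are $\pi(n)$ (when nonempty) together with possibly $\{p\}$ and $\{q\}$, and each is a clique. This gives at most $3$ components. Since every component is a clique, an anticlique can contain at most one vertex from each component, so $t(G)\le 3$.

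The only point needing care is the degenerate case $p=q$, or $n$ a prime power or $n=1$. These only lower the count, and the lemma asks only for an upper bound, so no genuine obstacle is expected. Also note that the cliques are guaranteed by the existence of $c$ of order exactly $n$, which holds because $\theta$ is smooth (Lemma \ref{smooth}).
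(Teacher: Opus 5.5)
Your proof is correct and takes essentially the same approach as the paper. Both use the gpnf property to force any element of composite order $rs$ to be conjugate to a power of the order-$n$ generator, so that $rs\mid n$, and use powers of that generator to show $\pi(n)$ spans a clique, which leaves $p$ and $q$ as isolated vertices when they do not divide $n$; the paper splits into cases according to which of $p,q$ divide $n$, whereas you treat all cases uniformly.
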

\begin{proof}
Observe that  $\pi (n)$ forms a clique in  ${\rm GK}(G)$. Let $a$ be an element of $G$ of
order $n$  and $r$  an arbitrary element of $\pi (n)$. If $pq$ divides  $n$, then  $\pi(|G|)$
forms  a clique  since  $a^{n/p}$, $a^{n/q}$ and $a^{n/r}$ are pairwise commuting elements of order
$p$, $q$, $r$ respectively.  If, on the other hand, $p$ divides $n$ and $q$ does not,
then $a^{n/p}$
and $a^{n/r}$ are such elements of order $p$ and $r$, respectively.  Additionally $q$ is not  adjacent
to $r$ since otherwise we would have an element of order $qr$ in $G$,  and therefore,  due to the gpnf
property  $qr$ would divide $n$  and so ${\rm GK}(G)$ would be the union of two cliques  $\{q\}$ and
$\pi(n)$. Finally,  for $p$ and $q$ not dividing $n$, ${\rm GK}(G)$ would be the  union of three
cliques spanned by $\{p\}$, $\{q\}$ and $\pi (n)$.
\end{proof}

\begin{lem}\label{simple gpnf}
The only finite simple gpnf $(2,3,n)$-group is the alternating group $A_5$. Furthermore there
are no gpnf-actions of simple groups with signatures $(n,\stackrel{r}{\ldots}\,,n)$,
 $(2,\ldots ,2,n)$ and $(3, \ldots,3,n)$.
\end{lem}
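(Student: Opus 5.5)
The plan is to combine Lemma \ref{cliques} with the classification in Table \ref{t(G)=2}. Let $G$ be a finite simple gpnf $(2,3,n)$-group with generators $a,b,c$ of orders $2,3,n$. By Corollary \ref{qp-gpnf}, every element of $G$ is conjugate to a power of $a$, $b$ or $c$. Hence every element has order $2$, $3$, or a divisor of $n$.

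\textbf{Step 1: constraints from the prime graph.} By Lemma \ref{cliques}, every connected component of ${\rm GK}(G)$ is a clique and $t(G)\le 3$.
\begin{itemize}
\item If $6\mid n$, then ${\rm GK}(G)$ is a single clique, so $t(G)=1$. This contradicts $t(G)\ge 2$ for simple groups.
\item If exactly one of $2,3$ divides $n$, then ${\rm GK}(G)$ is the union of two cliques, $\{q\}$ and $\pi(n)$, with $q\in\{2,3\}$. So $t(G)=2$ and $G$ appears in Table \ref{t(G)=2}.
\item If $\gcd(n,6)=1$, then $\{2\}$ and $\{3\}$ are isolated vertices and $\pi(n)$ is a clique, so $t(G)\le 3$.
\end{itemize}

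\textbf{Step 2: consequences of isolated vertices.} If $2$ is isolated, every involution centralizer is a $2$-group. Every $2$-element is conjugate to a power of $a$ (when $2\nmid n$), so $G$ has exponent-$2$ Sylow $2$-subgroups, which are therefore elementary abelian. Lemma \ref{elementary abelian_2_groups} then leaves only ${\rm PSL}(2,q)$, $J_1$ and the Ree groups. Similarly, if $3\nmid n$, then $G$ has no element of order $6$, and Lemma \ref{elements of order 6} applies. Intersecting these lists, the case $\gcd(n,6)=1$ reduces to ${\rm PSL}(2,2^m)$ or ${\rm PSL}(2,q)$ with $q\equiv 3,5 \pmod 8$. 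In the two-component case, either the same lists apply or we use Table \ref{t(G)=2} directly.

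\textbf{Step 3: case analysis.} For each candidate we use known element orders.
\begin{itemize}
\item In ${\rm PSL}(2,q)$ the element orders are $p$, divisors of $(q\pm1)/d$, and $2,3$. Apart from $2$ and $3$, every element must have order dividing $n$, so all of $p$, $(q-1)/d$ and $(q+1)/d$ (up to the parts $2,3$) must divide a single cyclic order $n$. Since these lie in distinct components of ${\rm GK}$, at most one of them may exceed the primes $2,3$. This forces tiny $q$, namely $q\in\{4,5\}$, which gives $A_5$. One also checks $q=7,9$.
\item $J_1$, the Ree groups and the Suzuki groups have at least three components with more than one nontrivial prime, or elements of order $p^2$ together with other large orders, so they fail.
\item For the Table \ref{t(G)=2} entries we use their known spectra. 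For example, $J_2$ has elements of orders $5,7,8,12$; both $7$ and $8$ are not powers of $2,3$ and do not divide a common element order. The Lie-type families fail similarly.
\end{itemize}
Finally, we verify that $A_5$ with $n=5$ works.

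\textbf{Step 4: the other signatures.} For $(n,\ldots,n)$ all element orders divide $n$ and every element is a power of a conjugate of some generator. Since each generator is an element of order $n$, every prime divisor of $|G|$ divides $n$, and elements of those prime orders arise as powers of one element of order $n$. So ${\rm GK}(G)$ is a clique and $t(G)=1$, a contradiction.

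For $(2,\ldots,2,n)$ and $(3,\ldots,3,n)$ the argument of Step 1 gives at most two cliques. The analysis of Steps 2–3 then leaves only groups like $A_5$, which would need to be of type $(2,\ldots,2,5)$ or $(3,\ldots,3,5)$. But a gpnf group must contain conjugates of all element classes: $A_5$ has elements of orders $2$, $3$ and $5$, while the periods provide only orders $2$ (or $3$) and divisors of $n$. So $A_5$ would need $6\mid n$ or $15\mid n$, which is impossible in $A_5$. Hence there is no such simple group, and no action.

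The main obstacle is Step 3: a clean, exhaustive elimination of the Lie-type families in Table \ref{t(G)=2} using their spectra. I would handle this by showing, in each case, that there are two non-adjacent primes both different from $2$ and $3$, for instance $p$ and $r_3$, whenever $q>4$.
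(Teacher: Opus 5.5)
Your framework matches the paper's (Lemma~\ref{cliques}, Table~\ref{t(G)=2}, Lemmas~\ref{elementary abelian_2_groups} and~\ref{elements of order 6}, maximal cyclic subgroups of ${\rm PSL}(2,q)$), and the case $\gcd(n,6)=1$ is handled adequately.

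The gap is the two-component case $2\mid n$, $3\nmid n$. There $G$ lies in Table~\ref{t(G)=2}, and you dismiss its members only with ``the Lie-type families fail similarly'', which you yourself call the main obstacle. The repair you sketch does not work as stated:
\begin{itemize}
\item for $B_2(q)$ and ${}^2A_2(q)$ the anticliques are $\{p,r_4\}$ and $\{p,r_6\}$, and $p\in\{2,3\}$ is possible (e.g.\ $B_2(2^m)$, ${}^2A_2(9)$);
\item for $A_2(3)$, ${}^2A_2(3)$ and $B_2(3)$ only one prime other than $2,3$ divides the group order, so two non-adjacent primes different from $2,3$ do not exist.
\end{itemize}
If you instead follow your Step~2 remark and use Lemma~\ref{elements of order 6}, then ${\rm PSL}(3,2^m)$ and ${\rm PSU}(3,2^m)$ are never treated in Step~3. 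The same unproved elimination is hidden in Step~4 behind ``the analysis of Steps 2--3 then leaves only groups like $A_5$''. For $(3,\ldots,3,n)$ you are forced into $2\mid n$, $3\nmid n$, and for $(2,\ldots,2,n)$ with $n$ odd again $t(G)=2$. So both cases land in Table~\ref{t(G)=2}.

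The missing observation, which is how the paper argues, makes the spectral analysis unnecessary. Once the clique cases are discarded, $6\nmid n$. An element of order $6$ could only be conjugate to a power of the generator of order $n$, so in \emph{every} remaining case $G$ has no element of order $6$. Hence $G$ is on the list of Lemma~\ref{elements of order 6}. No group of Table~\ref{t(G)=2} is on that list: there $A_2(q)$ and ${}^2A_2(q)$ have $q$ odd, and none of the other entries has the listed form. So $t(G)=2$ is impossible. This disposes at once of the $2\mid n$ case for $(2,3,n)$ and of the signatures $(2,\ldots,2,n)$ and $(3,\ldots,3,n)$.

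For $n$ odd, intersecting with Lemma~\ref{elementary abelian_2_groups} as in your Step~2 leaves only ${\rm PSL}(2,q)$. Your vague remarks on $J_1$, the Ree groups and the Suzuki groups are therefore not needed.

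Two small slips:
\begin{itemize}
\item In the ${\rm PSL}(2,q)$ step, the correct condition is that each of $p$ and $(q\pm1)/d$ equals $2$ or $3$ or divides $n$, not ``up to the parts $2,3$''.
\item $A_5$ of type $(3,\ldots,3,n)$ would need $10\mid n$, not $6\mid n$.
\end{itemize}
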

\begin{proof}
 Let $G$ be a gpnf $(2,3,n)$-group generated by elements $x$ and  $y$ of orders $2$ and $3$
respectively, whose product  $z$ has order $n$. Then, by the definition of a $gpnf$-group
\begin{equation}
G=\bigcup_{g\in G}(\langle x\rangle^g\cup\langle y\rangle^g\cup\langle z\rangle^g).
\end{equation}
Therefore $G$ has no elements of order $6$, since such an element would be conjugated to a power of $z$.
Thus $6$ would divide $n$, contradicting $t(G) \geq 2$, as ${\rm GK}(G)$ would be a clique.
If  $n$ is even then  ${\rm GK}(G)$ is  the  union of two cliques spanned by $3$ and $\pi (n)$,
and thus $G$ would be isomorphic to one of the groups listed in Table \ref{t(G)=2}. However, this is
impossible since all these groups must contain elements of order $6$ by Lemma \ref{elements of order 6}.
Finally, if $n$ is odd, then a $2$-Sylow subgroup of $G$  is elementary abelian and then it belongs to the
groups given in Lemma \ref{elementary abelian_2_groups}. But $t(G) = 5$ for the Ree group and
$t(G)=4$  for the first Janko group. Eventually,  for ${\rm PSL}(2,q)= A_1(q)$,   $\{p,r_1,r_2\}$  forms
a maximal coclique by \cite{VV1}, while according to \cite{VV2}, this coclique is unique
among maximal ones. On the other hand, it is well-known that ${\rm PSL}(2,q)$ for  $q=p^k$ has,
up to conjugation, three maximal cyclic subgroups of orders $(q+1)/2,  (q-1)/2$ and $p$ (see for
example \cite{Huppert}), which,  due to the gpnf property, implies that
$\{2,3,n\}=\{p, (q-1)/2, (q+1)/2\}$ which is possible only for $n=q=p=5$ resulting in $G=A_5$.

For  a gpnf $(n, \ldots, n)$-group, ${\rm GK}(G)$ would be a clique spanned by $\pi (n)$,
which is not the case.  For $(2,\stackrel{r}{\ldots}\, ,2,n)$ with  odd $n$, a Sylow $2$-subgroup
of $G$  is elementary abelian which was ruled out in the previous part of the proof.
For even $n$, ${\rm GK}(G)$ would be a clique again, spanned by $\pi (n)$  once more.

Finally let $G$ be a gpnf $(3,\ldots ,3,n)$-group and let $z$ be a fixed element of order $n$.
Any nontrivial element of order not equal to $3$ is conjugate to a power of $z$, and thus its order
divides $n$. Hence, all primes not equal to $3$ dividing $|G|$ are adjacent in ${\rm GK}(G)$.
If $3$ divides $n$ as well, then ${\rm GK}(G)$ is a clique spanned by $\pi(n)$, which is not
possible. So assume that $3$ does not divide $n$. Then ${\rm GK}(G)$ is a union of two cliques
spanned by $3$ and $\pi (n)$, and therefore $G$ is one of the groups listed in Table \ref{t(G)=2}.

Let $\rho(G)=\{r,s\}$, where $r,s\neq 3$. Then elements of orders $r$ and $s$ are
conjugate to a power of $z$, which means that if $r$ and $s$ are adjacent in ${\rm GK}(G)$,
it leads to a contradiction since they form an anticlique. Thus $G$ cannot appear in any of
items 1, 4-8 of Table \ref{t(G)=2} because they contain elements of order six by Lemma
\ref{elements of order 6}. Next, suppose that $G$ appears in item 2 in the table.
If $q=p^m$ with $m>1$, then $p^m=2^k-1$, which is not possible by Mihailescu solution
of the Catalan Conjecture \cite{Mih}. Thus $q=p$ is an odd prime, and then $G$ contains elements
 of order six by Lemma \ref{elements of order 6}. This excludes such groups.
Similar arguments verify that $G$ appears in item 3. Indeed, here $q$ cannot be a proper power of $p$
 since otherwise $p^m = 2^k+1$, and again by Mihailescu's Theorem, either $q=9$, $k=3$, or $q=p$ is
 an odd prime. In both cases, $G$ contains elements of order $6$ by Lemma \ref{elements of order 6},
 which finishes the whole proof.
\end{proof}

\begin{cor}\label{no 23n simple gpnf}
There are no simple  gpnf $(2,3,n)$-actions on a compact Riemann surface of genus $g \geq 2$
\end{cor}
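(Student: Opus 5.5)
The corollary follows by combining Lemma \ref{simple gpnf} with Corollary \ref{qp-gpnf} and Remark \ref{g-p vs action}. Suppose, to the contrary, that a finite simple group $G$ acts on a compact Riemann surface $X$ of genus $g\geq 2$ with the gpnf property. Suppose further that this action is a $(2,3,n)$-action, i.e.\ it is given by a smooth epimorphism $\theta:\Delta\to G$ from a Fuchsian triangle group $\Delta$ of signature $(2,3,n)$.

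Put $x=\theta(x_1)$, $y=\theta(x_2)$ and $z=\theta(x_3)$. By Lemma \ref{smooth}, these elements have orders exactly $2$, $3$ and $n$. The defining relation of $\Delta$ gives $xyz=1$. Since $\Delta$ surjects onto $G$, the elements $x,y,z$ generate $G$. By Corollary \ref{qp-gpnf}, every element of $G$ is conjugate to a power of $x$, $y$ or $z$. Hence $G$ is a simple gpnf $(2,3,n)$-group in the sense of Remark \ref{g-p vs action}.

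Lemma \ref{simple gpnf} then forces $G\cong A_5$. Since every element of $A_5$ has order $1,2,3$ or $5$ and $z\neq 1$ (otherwise $G$ would be generated by $x$ and $y$ with $xy=1$, hence cyclic), we get $n\in\{2,3,5\}$. In each case $1/2+1/3+1/n>1$, so $-2+3-(1/2+1/3+1/n)<0$, and the group $\Delta$ would not be Fuchsian. More concretely, the Riemann--Hurwitz formula (\ref{RH}) together with (\ref{area}) gives $2g-2=|G|\,(1-1/2-1/3-1/n)<0$, so $g\le 0$. This contradicts $g\geq 2$. For $n=5$ this is precisely the genus-$0$ (spherical) case excluded in Remark \ref{g-p vs action}.

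The only point needing care is making sure the orders of $x,y,z$ are exactly $2,3,n$, so that Lemma \ref{simple gpnf} applies as stated. Lemma \ref{smooth} guarantees this because the kernel is a surface group. All the substantive work is already contained in Lemma \ref{simple gpnf}.
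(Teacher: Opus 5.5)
Your proposal is correct and follows the same route as the paper: you use Lemma \ref{simple gpnf} to force $G\cong A_5$ (so $n=5$), and then the Riemann--Hurwitz formula gives genus $0$ rather than $g\geq 2$. You add a little more detail than the paper's one-line proof, namely the passage from the action to a gpnf group via Corollary \ref{qp-gpnf} and the harmless extra cases $n\in\{2,3\}$, but the argument is essentially identical.
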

\begin{proof}
Indeed,  for the exceptional case given in  Lemma
\ref{simple gpnf} we have $n=5$ which leads to the spherical  $(g=0)$ and not to a hyperbolic case.
\end{proof}

\begin{lem}\label{one cyclic}
Let $H$ be a normal subgroup of a finite group $G$ such that
\begin{equation} \label{one cyclic formula}
H= \bigcup_{g \in G}\; K^g
\end{equation}
 for  a cyclic subgroup $K$ of $H$.
  Then $H$ is solvable.
\end{lem}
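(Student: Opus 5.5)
The plan is to argue by induction on $|H|$ (with $G$ allowed to vary), reducing to the case where $H$ is a minimal normal subgroup of $G$. In that case $H\cong S^t$ for some simple $S$, and the covering hypothesis (\ref{one cyclic formula}) is shown to be incompatible with $S$ being non-abelian. Throughout, write $K=\langle k\rangle$ with $|K|=n$. Since $K\le H$ and $H$ is normal, every element of $H$ is $G$-conjugate to some power $k^i$. In particular every element order in $H$ divides $n$, and every prime dividing $|H|$ divides $n$.

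\emph{Reduction step.} Suppose $1\neq M\neq H$ with $M$ normal in $G$. If $h\in M$ and $h=(k^i)^g$, then $k^i=h^{g^{-1}}\in M$, because $M$ is normal in $G$. Hence $M=\bigcup_{g\in G}(K\cap M)^g$, and $K\cap M$ is cyclic. Likewise, in $G/M$ the normal subgroup $H/M$ is the union of the $G/M$-conjugates of the cyclic group $KM/M$. By induction both $M$ and $H/M$ are solvable, so $H$ is solvable. We may therefore assume that $H$ is a minimal normal subgroup of $G$, so $H=S_1\times\cdots\times S_t$ with all $S_j\cong S$ simple. If $S$ is abelian we are done, so assume $S$ is non-abelian and derive a contradiction.

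\emph{Case $t=1$.} Here $H=S$ is non-abelian simple, so $t(S)\ge 2$ by \cite{VV1}. Hence ${\rm GK}(S)$ contains two non-adjacent primes $r,s$. Both divide $|S|$, so both divide $n$ by the first paragraph. Then $k^{n/(rs)}\in K\le S$ has order $rs$, so $r\sim s$ in ${\rm GK}(S)$, which is a contradiction.

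\emph{Case $t\ge 2$.} Conjugation by $G$ induces automorphisms of $H$, and $G$ permutes the factors $S_j$. Consequently the size of the support $\{j: h_j\neq 1\}$ of an element $h=(h_1,\dots,h_t)$ is invariant under $G$-conjugation. Fix a prime $r$ dividing $|S|$ and $x\in S$ of order $r$. The elements $u=(x,1,\dots,1)$ and $v=(x,x,1,\dots,1)$ both have order $r$. Each is $G$-conjugate to a power of $k$ of order $r$, and every such power generates the unique subgroup $\langle k^{n/r}\rangle$ of order $r$ in $K$. So $u$ and $v$ are conjugate to generators of the same cyclic group of prime order. Powers $c^j$ with $r\nmid j$ have the same support as $c$. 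Hence $u$ and $v$ would have supports of equal size; but these sizes are $1$ and $2$, a contradiction.

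The main point needing care is the reduction step. One must check that the covering property passes both to $M$ and to the quotient $H/M$, and this uses that $M$ is normal in all of $G$, not just in $H$. That is why the induction should be run over pairs $(G,H)$ and stop at a minimal normal subgroup of $G$. After that, the simple case uses only $t(S)\ge 2$ from \cite{VV1}, and the case $t\ge2$ uses only the support count.
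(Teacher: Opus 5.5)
Your proof is correct. Its skeleton matches the paper's: induction on $|H|$ over pairs $(G,H)$, passing the covering property to $M$ and to $H/M$ for $M$ normal in $G$, and reducing to a minimal normal $H\cong S^t$. Your simple case $t=1$ is the paper's argument made explicit. The paper says $S$ must have two maximal cyclic subgroups of different orders, since otherwise ${\rm GK}(S)$ would be a clique. You produce the element $k^{n/(rs)}$ of order $rs$ directly.

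The real difference is the case $t\ge 2$.

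\begin{itemize}
\item The paper takes maximal cyclic subgroups $\langle x\rangle,\langle x'\rangle$ of $S$ of different orders. It notes that the diagonal elements $(x,\dots,x)$ and $(x',\dots,x')$ generate maximal cyclic subgroups of $H$ of different orders. But the covering forces every maximal cyclic subgroup of $H$ to be some $K^g$, so all of them have the same order.
\item You use instead that $G$ permutes the nonabelian simple factors, so support size is invariant under $G$-conjugation. Then $(x,1,\dots,1)$ and $(x,x,1,\dots,1)$ cannot both be conjugate to generators of the unique subgroup of order $r$ in $K$.
\end{itemize}

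Your route is more elementary for $t\ge 2$. It needs no prime-graph information, so the appeal to \cite{VV1} is confined to the simple case, and it works for any prime divisor of $|S|$. The paper's route uses one invariant, the orders of maximal cyclic subgroups, uniformly in both cases. The price is two small checks it leaves unstated: that the diagonal element generates a maximal cyclic subgroup of $S^t$, and that every maximal cyclic subgroup of $H$ is a conjugate of $K$.
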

\begin{proof}
We shall prove the lemma by induction on the order of $H$. Let $K= \langle \,  h \,  \rangle$
for $h \in H$ and assume first that $H$ contains a proper subgroup $N\neq 1 $,   normal in
 $G$. Then for
 $\widetilde G=G/N, \widetilde H= H/N, \tilde h=hN$, $K'= \langle \, h^k \,  \rangle$,
 where $k$ is the order of $\tilde h$, we have
$$
\widetilde H= \bigcup_{\tilde g\in \widetilde G} \;\langle \, \, \tilde  h  \,\rangle^{\tilde g} \;\; {\rm and } \;\;  N=\bigcup_{g\in G} \; K'^g.
$$
Hence,   by the inductive assumption, $N$ and $H/N$ are solvable and so  $H$ is solvable also.
Assume that $H=N$ which means that $H$ is a minimal normal subgroup of $G$. If $H$ is non-abelian,
then $H=A\times \stackrel{r}{\ldots} \times A$  for  a non-abelian finite simple group $A$.
But for $r=1$, the group $H=A$ has at least two nonisomorphic maximal cyclic subgroups
since otherwise the prime graph ${\rm GK}(A)$  would be a clique. Therefore, $H$ cannot be
presented as a union of isomorphic cyclic groups.
On the other hand, if $x\in A$ generates a maximal cyclic subgroup of $A$, then
$(x, \stackrel{r}{\ldots}, x)$  generates a maximal cyclic subgroup of
$H=A\times \stackrel{r}{\ldots} \times A$ and therefore also for $r>1$,  $H$ has
at least two nonisomorphic maximal  cyclic subgroups which is not our case in which $H$
is covered by a family of isomorphic cyclic groups, and so our proof is complete.
\end{proof}

Before proceeding to the next lemma, let's recall that a group is said to be {\it perfect} if
its abelianization is trivial.

\begin{lem}\label{perfect gpnf}
There are no perfect gpnf $(p,\ldots ,p,n)$-group, for $p = 2,3$,
and the only perfect gpnf $(2,3,n)$-group  is $A_n$ with $n=5$.
\end{lem}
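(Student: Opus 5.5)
Let $G$ be a perfect gpnf-group of one of the types $(2,3,n)$ or $(p,\ldots,p,n)$, $p\in\{2,3\}$, with generators $a_1,\ldots,a_r$ as in Remark~\ref{g-p vs action}. I would argue by reducing to the simple case treated in Lemma~\ref{simple gpnf}, using that the gpnf property passes to quotients. If $N$ is a proper normal subgroup of $G$, then $G/N$ is generated by the images $\bar a_i$ with $\bar a_1\cdots\bar a_r=1$, and every element of $G/N$ is conjugate to a power of some $\bar a_i$. The orders of the $\bar a_i$ divide the original orders. So $G/N$ is a gpnf-group of a degenerate type, in which some periods may drop to divisors or to $1$.

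Take $N$ a maximal normal subgroup, so that $S=G/N$ is simple. Since $G$ is perfect, $S$ is nonabelian, and hence $t(S)\ge 2$. The first step is to check the types $S$ can have. If some $\bar a_i$ becomes trivial, the type loses a generator. A type with one generator, or with periods in which $\bar a_1\cdots\bar a_r=1$ leaves essentially one cyclic generator up to conjugacy, makes $S$ a union of conjugates of a single cyclic group. Lemma~\ref{one cyclic} then makes $S$ solvable, which is impossible. The reduced types are therefore again of the form $(2,3,n')$, $(p,\ldots,p,n')$ or $(n',\ldots,n')$ with $n'\mid n$. The degenerate case $(2,\ldots,2)$ or $(3,\ldots,3)$ with the last generator killed forces $S$ to be a $p$-group, again impossible. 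The proof of Lemma~\ref{simple gpnf} used only the gpnf covering property together with prime-graph facts, not hyperbolicity. It therefore shows that $S\cong A_5$ with type $(2,3,5)$ in the first case, and that no such $S$ exists in the $(p,\ldots,p,n)$ cases. This already disposes of the $(p,\ldots,p,n)$ statement.

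For the $(2,3,n)$ case we know $G/N\cong A_5$, and it remains to show $N=1$; I expect this to be the main obstacle. The covering $G=\bigcup_g(\langle x\rangle^g\cup\langle y\rangle^g\cup\langle z\rangle^g)$ gives the following. Elements of $N$ of order other than $2$ or $3$ must be conjugate to powers $z^k$ lying in $N$, which happens only when $5\mid k$. Elements of $N$ of order $2$ or $3$ map to $1$ in $A_5$, but $x$ and $y$ map nontrivially, so their conjugates and nontrivial powers are not in $N$. Hence
\[
N=\bigcup_{g\in G}\langle z^5\rangle^g ,
\]
a union of conjugates of one cyclic group. Lemma~\ref{one cyclic} then makes $N$ solvable.

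If $N\ne1$, pass to a minimal normal subgroup $M$ of $G$ inside $N$, an elementary abelian $r$-group. I would then work in $G/M'$, where $M'$ is a maximal $G$-invariant subgroup of $N$. There $N/M'$ is an irreducible $\mathbb{F}_r[A_5]$-module $V$, and $V$ is nontrivial because a trivial module would give a central extension and contradict perfectness. The contradiction would come from counting: $V$ must be covered by the conjugates of a single cyclic subgroup $\langle \bar z^5\rangle$ of order $r$. Its conjugates number at most $|G:C_G(\bar z^5)|$, while $C_G(\bar z^5)$ contains $V$ and $\langle \bar z\rangle$, so there are at most $60/5=12$ of them. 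They cover $|V|-1\le 12(r-1)$ nonzero vectors. This forces $\dim V=1$, so the module is trivial, a contradiction. Hence $N=1$ and $G=A_5$.
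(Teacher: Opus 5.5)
Your reduction is sound and a bit cleaner than the paper's. Passing to a simple quotient $G/N$ and applying Lemma~\ref{simple gpnf} gives $G/N\cong A_5$ directly, with no minimal-counterexample bookkeeping and no small cases $k\le 6$ (Miller's result). Your derivation of $N=\bigcup_{g}\langle z^5\rangle^g$, and of its solvability via Lemma~\ref{one cyclic}, matches the paper.

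\textbf{The gap is in the final counting step.} Write $d=\dim V$. Your bound $|V|-1\le 12(r-1)$ says $1+r+\cdots+r^{d-1}\le 12$. This holds for $d=2$ and every prime $r\le 11$, and for $d=3$, $r=2$, so it does \emph{not} force $\dim V=1$. The missing ingredient is representation-theoretic. A nontrivial irreducible $V$ is a faithful $A_5$-module, since $A_5$ is simple, so you must show $A_5\not\le {\rm GL}(2,r)$ and $A_5\not\le {\rm GL}(3,2)$.
\begin{itemize}
\item For ${\rm GL}(3,2)$: $5\nmid |{\rm GL}(3,2)|=168$.
\item For ${\rm GL}(2,2)\cong S_3$: again $5$ does not divide the order.
\item For odd $r$: a perfect subgroup of ${\rm GL}(2,r)$ lies in ${\rm SL}(2,r)$, whose only involution is $-I$, while $A_5$ has $15$ involutions.
\end{itemize}
This is exactly the fact the paper uses when it says ``$A_5$ is not contained in ${\rm Aut}(Z_5^2)$''. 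The paper first sharpens the count by transitivity: the stabilizer of $\langle\bar z^5\rangle$ contains an element of order $5$, so the number of lines $(r^d-1)/(r-1)$ is $1$, $6$ or $12$. Incidentally, the paper's claim that $12$ is never attained overlooks $r=11$, $d=2$; the same embedding fact disposes of that case.

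\textbf{The trivial-module case is not closed by perfectness either.} For $r=2$, the binary icosahedral group ${\rm SL}(2,5)$ is a perfect central extension of $A_5$ by $Z_2$, because the Schur multiplier of $A_5$ is $Z_2$. You need the gpnf structure here, as the paper does. Suppose $V=\langle v\rangle$ is central of order $r$, so $\bar z$ has order $5r$ in $G/M'$. Every element of $G/M'$ must then have order dividing $2$, $3$ or $5r$. But $\bar y v$ has order $3r$ when $r\ne 3$, and $\bar x v$ has order $6$ when $r=3$; neither divides $2$, $3$ or $5r$. Equivalently, for $r=2$ the image of $x$ would be a non-central involution in ${\rm SL}(2,5)$, which is impossible.

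With these two additions your argument goes through. The minimal normal subgroup $M$ you introduce is never used and can be dropped.
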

\begin{proof}
 Let $G$ be a perfect gpnf $(p,\stackrel{r}{\ldots}\, ,p,n)$-group  of the smallest possible
 order.  Then, by the second part of Lemma  \ref{simple gpnf}, $G$ is non-simple. Let $H$ be
 a nontrivial and proper normal subgroup of $G$. Then $G/H$ is a perfect
 $(p,\stackrel{r'}{\ldots}\, ,p,n')$-group, which is impossible  by the minimality of $G$.

Then let $G$ be a perfect gpnf $(2,3,n)$-action for some $n\geq 7$ of minimal possible order,
and let $a$ and $b$ be generators of orders $2$ and $3$ whose product $c$ has order $n$, and
any element of $G$ is conjugated to $a$ or to a power of $b$ or $c$.
By Lemma \ref{simple gpnf}, $G$ is not simple. Let $H$ be a nontrivial normal  subgroup of $G$
of minimal order.
Then   $\tilde c \in G/H$ is an element of order $k \leq 6$,  while $\tilde a$ and
$\tilde b$  have orders $2$ and $3$. For $k=1,2,3,4$  $G/H$  would be respectively
trivial (which is not possible as $H$ is a proper subgroup of $G$), the alternating groups
$A_3, A_4$ or the symmetric group $S_4$ and none of them is perfect.
For  $n=6$,  $G/H$ is an extension of the abelian group  $Z_m^2$ by $Z_6$ by the
well-known result of Miller  \cite{Mil}  (see also \cite{S}). So $k=5$, and therefore $G/H=A_5$.
{The induced action} of $H$ is purely non free while by \cite{Sin} (see also more
available \cite[Theorem 2.2.4]{BEGG}), this action   of $H$  is of type
$(0;m, \stackrel{12}{\ldots}\,,m)$-group  for $m= n/5$
which precisely means that $H$ is generated by twelve elements of order $m$  with the
trivial product, and any element of $H$ is conjugated in $H$ to a power of  one of these
twelve elements, which means that $H$ is a gpnf $(0;m, \stackrel{12}{\ldots}\,,m)$-group.
Any  element of $G$ of order $n$ is conjugated in $G$ to a power of $c$ and therefore any
of these twelve elements is conjugated in $G$ to a power of $c^5$, and so  by Lemma
\ref{one cyclic},
$H$ is solvable. Observe that  $H$ cannot contain a proper nontrivial subgroup $N$ being
normal in $G$, since in such a case $G/N$ would  be a  gpnf  $(2,3,n')$-group  for some
$n' \geq 7$, contrary to the minimality of $G$.
So $H$ is an elementary abelian $p$-group, say of order $p^s$. Now, since $H$ is abelian,
the transitive action of $G$  on the set of cyclic subgroups of $H$ induces a transitive
action of $A_5$ on this set. Furthermore $\tilde c \in G/H =A_5$  being of order  $5$
belongs to  the  stabilizer of the point represented by the subgroup
$\langle \, c^5 \, \rangle$. Therefore it has index $1,6$ or $12$  since $A_5$ has no
subgroups of index   $\leq 5$.
Therefore, the unique orbit of this action has length $l=12, l=6 $  or $l=1$. The last would
imply that $H\simeq Z_p$ and therefore $G$ would be a $(2,3,5p) $ of order $60p$.
 This is impossible since the action of $A_5$ on $H=\langle c^5\rangle$ is trivial due to
 the simplicity of $A_5$ and then either $ac^5$ has order $2p$ (when $p\neq 2$) or $bc^5$
 has order $3p$ (when $p\neq 3$). There are no elements of such order in $G$.

Next, the number of cyclic subgroups of $H$ is equal to
$$
1+p+\cdots+p^{s-1}= \frac{p^s-1}{p-1},
$$
which is not equal to $12$ for any prime $p$. This number is equal to $6$ only for $p=5$
and for $s=2$  i.e.  for $H\cong Z_5^2$.  Again, the action of $A_5$ on $Z_5^2$  would
be trivial since $A_5$ is not contained in ${\rm Aut}(Z_5^2)$.
\end{proof}


\begin{lem}\label{union of cyclics}	
A finite  group is a union of three cyclic subgroups if and only if it is isomorphic to
$T \times Z_k$, where $T$ is either the Klein four-group or the quaternion group of order $8$
and $k$ is odd.
\end{lem}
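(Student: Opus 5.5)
We read the statement as referring to a non-cyclic group $G$, since a cyclic group is trivially a union of three cyclic subgroups. In that case the three cyclic subgroups in the cover are proper, and we may assume the cover is irredundant. The plan has three steps: reduce to Scorza's structure for groups covered by three proper subgroups, split off the odd part, and finish with the classification of $2$-groups having a cyclic maximal subgroup.

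\emph{Step 1 (Scorza-type reduction).} Write $G=A\cup B\cup C$ with $A,B,C$ proper cyclic subgroups and $|A|\ge|B|\ge|C|$. Since all three contain $1$, we get $|G|\le |A|+|B|+|C|-2<3|A|$, so $[G:A]=2$. Then $G\setminus A\subseteq (B\setminus A)\cup(C\setminus A)$. Here $B\cap A$ has index at most $2$ in $B$, so $|B\setminus A|\le |B|/2\le |G|/4$, and similarly for $C$. Since $|G\setminus A|=|G|/2$, both $B$ and $C$ must have index exactly $2$ and be disjoint outside $A$. Hence $A,B,C$ are the three subgroups of index $2$ containing $N=A\cap B=A\cap C=B\cap C$, and $G/N\cong Z_2\times Z_2$. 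In particular $N$ is cyclic and each of $A,B,C$ is cyclic containing $N$ with index $2$.

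\emph{Step 2 (splitting off the odd part).} Write $N=N_2\times N_{2'}$ with $N_{2'}\cong Z_k$, $k$ odd. Every element of odd order lies in $N$, because $G/N$ is a $2$-group. Hence $N_{2'}$ is the normal Hall $2'$-subgroup, and $G=N_{2'}\rtimes P$ with $P$ a Sylow $2$-subgroup. Every element of $G$ lies in one of the cyclic groups $A,B,C$, each of which contains $N$, so every element centralizes $N_{2'}$. Thus $G\cong Z_k\times P$. Intersecting the cover with $P$ shows that $P$ is a non-cyclic $2$-group which is the union of its three index-$2$ subgroups $A_2,B_2,C_2$, all of them cyclic, with cyclic intersection $N_2$.

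\emph{Step 3 (the $2$-group).} Here we invoke the classical list of $2$-groups with a cyclic maximal subgroup: cyclic, $Z_{2^r}\times Z_2$, dihedral, semidihedral, generalized quaternion, and modular $M_{2^r}$. We then check which of them have all three maximal subgroups cyclic.
\begin{itemize}
\item $Z_{2^r}\times Z_2$ with $r\ge 2$ has the non-cyclic maximal subgroup $Z_{2^{r-1}}\times Z_2$, leaving only $Z_2\times Z_2$.
\item Dihedral and semidihedral groups have dihedral or quaternion-type non-cyclic maximal subgroups.
\item $Q_{2^r}$ with $r\ge 4$ has generalized quaternion maximal subgroups, leaving only $Q_8$.
\item $M_{2^r}$ contains $Z_{2^{r-2}}\times Z_2$ as a maximal subgroup.
\end{itemize}
So $P$ is the Klein four-group or $Q_8$.

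For the converse, $V_4\times Z_k$ and $Q_8\times Z_k$ with $k$ odd are the unions of $\langle t\rangle\times Z_k$ as $t$ runs over three representatives of the maximal cyclic subgroups of $T$, and these products are cyclic because $k$ is odd.

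The main obstacle is Step 1, namely the counting showing that all three subgroups have index $2$. Step 3 is routine once the classification is quoted.
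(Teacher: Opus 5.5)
Your proof is correct and follows the paper's route. The paper gets your Step 1 conclusion ($N=A\cap B\cap C$ normal and cyclic with $G/N\cong Z_2\times Z_2$) by citing Scorza and Haber--Rosenfeld, then says this ``easily gives the assertion''; you instead prove the reduction by direct counting and spell out the omitted deduction via the central normal Hall $2'$-subgroup and the list of $2$-groups with a cyclic maximal subgroup. The one step stated too quickly is $C\supseteq A\cap B$ in Step 1: it holds because $G\setminus(A\cup B)$ is a full coset of $A\cap B$ contained in the subgroup $C$.
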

\begin{proof} It is an easy consequence of a result of Scorza  (cf.  \cite{Zappa}) by
which a group is the set-theoretic union of its three proper subgroups if and only
if it has the Klein four-group as a homomorphic image. Furthermore, Haber and Rosenfeld
\cite{HR}  have shown  that if $G=A \cup B \cup C$, then $H=A \cap B \cap C$ is a normal
subgroup of $G$  and $G/H$ is the Klein four-group which
easily gives the assertion since, in our case,  $H$ is cyclic as the intersection of cyclic groups.
\end{proof}

\begin{thm}\label{arbitrary}
There are no gpnf $(2,3,n)$-actions on surfaces of genus $g \geq 2$.
\end{thm}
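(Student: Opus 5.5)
We argue by contradiction. Suppose $G$ is a gpnf $(2,3,n)$-group acting on a surface of genus $g\geq 2$, so that $1/2+1/3+1/n<1$, i.e. $n\geq 7$. Let $a,b,c=ab$ be the generating elements of orders $2,3,n$, with every element of $G$ conjugate to a power of $a$, $b$ or $c$. We choose such a $G$ of minimal order among all counterexamples with $n\geq 7$. The plan is to reduce to the perfect case, where Lemma \ref{perfect gpnf} applies.

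First we examine the abelianization. Since $G/G'$ is abelian and generated by the images of $a$ and $b$, it is a quotient of $Z_2\times Z_3\cong Z_6$. Assume $G$ is not perfect. Then $G$ maps onto $Z_2$, onto $Z_3$, or onto both.

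If $G\to Z_2$, the kernel $H$ has index $2$. Since $b$ and $c^2$ lie in $H$, Lemma \ref{sing} shows that $H$ is generated by $b$, $aba^{-1}$ and $c^2$ with a signature $(3,3,n/2)$. The gpnf property passes to $H$ in a weakened form. Every element of $H$ is conjugate in $G$ to a power of $b$ or of $c$, hence conjugate in $H$ to a power of $b$, $b^a$, $c^2$, or of the corresponding $a$-conjugates. Refining the generating tuple accordingly, we aim to present $H$ as a gpnf group of type $(3,\ldots,3,m)$.

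If instead $G\to Z_3$, the kernel is similarly of type $(2,2,2,n/3)$. In both cases we pass to $G'$ and iterate. $G'$ is generated by conjugates of powers of $a$, $b$, $c$ and inherits a covering by conjugates of a $(2,\ldots,2,m)$ or $(3,\ldots,3,m)$ system. We take the last term of the derived series, which is perfect and again gpnf of such a type. By Lemma \ref{perfect gpnf} this term is trivial, so $G$ is solvable.

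For solvable $G$ we take a minimal normal subgroup $N\cong Z_p^s$. By minimality, $G/N$ is a gpnf $(2,3,n')$-group with $n'\leq 6$, i.e. spherical or Euclidean. The spherical solvable options are $Z_6$-quotients, $A_4$ and $S_4$. The Euclidean option, by Miller's result, is $Z_m^2\rtimes Z_6$.

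The key step is then a counting argument on $N$. We want to show that the elements of $N$, which must be conjugate to powers of $c^{n'}$ (or to $a$ or $b$ when $p=2,3$), cannot cover all $p^s-1$ nontrivial elements. The tool is the orbit bound $|G/N|$ on cyclic subgroups, as in the proof of Lemma \ref{perfect gpnf}. Extending elements of $N$ by the lifts of $a$ or $b$ should produce elements of order $2p$ or $3p$ not lying in $\langle c\rangle$, which gives a contradiction.

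An alternative route in the solvable case is Lemma \ref{union of cyclics}. Here we would use that $G/G''$ is covered by conjugates of three cyclic subgroups. We expect the main obstacle to be the Euclidean quotient $Z_m^2\rtimes Z_6$, where orbit lengths of $G/N$ on cyclic subgroups of $N$ are large. There the first attempt will be to show directly that the elements $xa$ with $x\in Z_m^2$ give orders not dividing $n$ and not in $\{2,3\}$.
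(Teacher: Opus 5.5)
Your overall strategy is workable in outline: take a minimal counterexample, reduce to the solvable case, then analyse a minimal normal subgroup $N\cong Z_p^s$ with $G/N$ a $(2,3,n')$-group, $n'\leq 6$. The proposal has two genuine gaps, however.

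\textbf{The reduction to solvability.} As written it is justified incorrectly, because the types do not stay of the form $(2,\ldots,2,m)$ or $(3,\ldots,3,m)$ down the derived series. If $G_{\rm ab}=Z_6$, Lemma \ref{sing} and the Riemann--Hurwitz formula give $G'$ the signature $(1;n/6)$. If $G_{\rm ab}=Z_2$ and $G'/G''\cong Z_3$, then $G''$ is of type $(n/2,n/2,n/2)$. Lemma \ref{perfect gpnf} says nothing about perfect gpnf groups of such types, so you cannot apply it to $P=G^{(\infty)}$.

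The step can be repaired with Lemma \ref{one cyclic}, which you do not use. If $a,b\notin P$, then no conjugate of a nontrivial power of $a$ or $b$ lies in $P$. Hence $P=\bigcup_{g\in G}(\langle c\rangle\cap P)^g$ is solvable, and being perfect it is trivial. If $a\in P$ or $b\in P$, then $G/P$ is cyclic, so $P=G'$ (or $P=G$). Then $P$ is of type $(2,2,2,n/3)$, $(3,3,n/2)$ or $(2,3,n)$, and Lemma \ref{perfect gpnf} does apply.

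\textbf{The solvable case.} This is the more serious gap. It is the real content of the theorem and occupies essentially all of the paper's proof, but you only announce it. Your mechanism is to combine $a$ or $b$ with elements of $N$ to get elements of order $2p$ or $3p$. This cannot succeed in general, because your setting contains $(2,3,n)$-groups whose element orders are all compatible with the gpnf property.

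For example, $G=Z_3\times S_4$ is a $(2,3,12)$-group with minimal normal subgroup $N=Z_3$ and $G/N\cong S_4$. Every element order divides $n=12$. Yet $G$ is not gpnf, for conjugacy reasons alone: an element $(1,\rho)$ with $\rho$ a $3$-cycle is conjugate to no power of the generators.

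The same phenomenon occurs in the $S_4$ layer with $N\cong Z_2^2$ and $|G|=96$. The paper eliminates that case via Broughton's classification of genus-$3$ actions. In the $S_4$ layer it also needs a Sylow $3$-subgroup argument for $p=3$ and a Riemann--Hurwitz count on an index-$3$ subgroup for $p=5$. In the $A_4$ layer $n=3p$, so elements of order $3p$ are harmless there, and the paper uses an orbit argument on the subgroups of order $p$.

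So you need genuine class-counting arguments for each of the quotients $S_3$, $A_4$, $S_4$. Note that $S_3$, the $(2,3,2)$ case, is missing from your list.

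The Euclidean quotient you flag as the main obstacle is in fact the easiest case. It forces $G_{\rm ab}=Z_6$, so $G'=\bigcup_{g}\langle c^6\rangle^g$. Moreover, $c^6$ is, up to inversion, the commutator of $[a,b]$ and $[b^2,a]$. Hence $G'=G''$, and by solvability (or Lemma \ref{one cyclic}) $G'=1$, contradicting $n\geq 7$.
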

\begin{proof}
  Let $G$ be such a group of the smallest possible order and let $x,y$ denote the generators of
  order $2$ and $3$, respectively, whose product $z=xy$ has order $n$.
Then, due to Remark \ref{g-p vs action} and by the last part of the Lemma
\ref{perfect gpnf}, $G$ cannot be perfect.
  So $G_{\rm ab} =G/G'= Z_2, Z_3$ or $Z_6$ and we shall consider these three cases separately.
  Here and in the proof of  Lemma \ref{33n & 344} we shall use some
well-known facts on groups of orders
$16,18,24$ and $27$ (consult for example \cite{TW} or GAP  library of groups of small order
\cite{GAP-Small, GAP}).

Suppose first that $G_{\rm ab}=Z_2$. We shall show first that $G/G^{(3)}=S_4$ and so for
notational simplicity assume for this part that   $G^{(3)}=1$.    Here   $G'$ is a gpnf
$(3,3,m)$-group, where $m=n/2$,
 and by  Lemma \ref{perfect gpnf},   $G'_{\rm ab} = Z_3$ or $G'_{\rm ab} = Z_3^2$.
 But the second case is impossible since
a non-abelian group of order $18$  has an element of order $9$ or it has at least three
conjugacy classes of subgroups of order $3$, contrary to the gpnf assumption on $G$ by
which it has at most two such classes.
So $G/G^{(2)}= D_3$. Therefore, being an abelian gpnf $(m,m,m)$-group,   $G^{(2)}$  is
the  set-theoretical union of at most three cyclic subgroups.
But it is well known that a group cannot be a set-theoretical union of two proper subgroups
and thus we have to consider the two cases only.

First we shall show that $G^{(2)}$ cannot be cyclic. For observe that for cyclic $G^{(2)}$,
we would have  a subgroup $K$ of $G^{(2)}$ normal in $G$
for which $G/K$ would be a gpnf  $(2,3,2p)$-group of order $6p$ for some prime $p$.
But clearly $p\neq 2$ since a $(2,3,4)$-group is equal to $S_4$. Also $p\neq 3$ since a
$(2,3,6)$-group has abelianization $Z_6$ by  aforementioned results \cite{Mil,S}).
Finally also  $p>3$ can not be the case here. Indeed, $K=\langle w \rangle$  for  $w=z^2$ and we
have $w^x= w^{-1}$ since  $w^x=w$ would make $w$ central in $G$ and we would have an element
of order $3p$ in $G$, contrary to the gpnf property of $G$. Next  $w^y=y^\beta$, where  $p$ divides
$\beta^3-1$. So on the one hand  $w^{xy}= w$ since $w=(xy)^2$, while on the other hand,
$w^{xy}= (w^y)^x=w^{-\beta}$, which implies that $p$ simultaneously divides $\beta +1$ and
$\beta^3-1=(\beta-1)\big(\beta(\beta+1)+1\big)$, which is an absurd.
So $G^{(2)}$ is the union of three cyclic groups, and therefore $G^{(2)}=Z_2^2$ by Lemma
\ref{union of cyclics}. 	
Therefore for the initial gpnf $(2,3,n)$-group $G$, $G/G^{(3)}= S_4$ indeed, and now
$G^{(3)}$ is a gpnf   $(m,m,m,m,m,m)$-group  for $m=n/4$. By the minimality of  $G$,
we can assume that   $m=p$  and $G^{(3)}$ is elementary abelian, say  $Z_p^s$. But
now $G^{(3)}$ is the union of at most six cyclic subgroups and so $s=2$ and $p\leq 5$.

Observe that $p\neq 2$ since otherwise $G$ would be a $(2,3,8)$-group of order
$96$, and so  it would act on a surface of genus $3$, contrary to the  classification of Broughton \cite{Br}.
Then also $p\neq 3$ since
otherwise we would have a gpnf  $(2,3,12)$-group of order $2^3 3^3$. But then an arbitrary
Sylow $3$-subgroup $P$  of    $G$  would be a  gpnf $(h;3, \stackrel{t}{\ldots}\,,3)$-group
and so in particular all nontrivial elements of $P$ have order $3$.
But by
(\ref{area}) and  (\ref{RH})
$$
2h-2+ t(1-1/3) =  8(1/12)=2/3
$$
and so $h=1, t=1$  or $h=0,t=4$.
Clearly, $P$ is non-abelian since
otherwise it would be covered by at most four cyclic groups by the gpnf property. In turn,
for  non-abelian $P$, $P_{\rm ab}=P/P'= Z_3^2$. But then in the first case,  $P_{\rm ab}$
would be cyclic, while in the second case,  at least one of the four generators of $P$ would
be in $P'$, and therefore  $G_{\rm ab}$ would be covered by at most three cyclic subgroups.

Finally, suppose, to derive a contradiction, that $p=5$, and $ \pi:G \to S_4$
be the canonical projection. Let $H$  be the preimage
of a  Sylow 2-subgroup $P$ containing  $y^5$. Then, by the Zassenhaus theorem, $H\cong K \rtimes P$,
and  $y\in  H$ since  $y^5 \in P$ and  $y^4 \in K$.
Summing up,  $G$ contains a subgroup $H$ of index $3$, which is a gpnf
 $(h;  m_1, \ldots, m_r)$-group for some $m_i$ and $h$, and at least  one of $m_i$, say $m_1$,
 is  equal to $20$.  But then, again by
 (\ref{area}) and  (\ref{RH})
$$
2h-2+ (1-1/20)+  \sum_{i=2}^r (1-1/m_i)) =  7/20,
$$
while on the other hand, the above equation has no solution in nonnegative integers.

Now suppose that $G_{\rm ab}=Z_3$. Then $G'$ is a gpnf $(2,2,2,m)$-group, where $m=n/3$,
and by Lemma \ref{perfect gpnf}, $G'_{\rm ab} = Z_2^s$ for  where $s=1,2$ or $3$. However,
$s\neq 1$. Furthermore, $s \neq 3$ since the only group of order  $24$  with
abelianization $Z_3$ is ${\rm SL}_2(3)$,  which has just one element of order $2$. Therefore,
$G/G^{(2)}$ is a $(2,3,3)$-group, and so  $G/G^{(2)}=A_4$. Now $G^{(2)}$ is a gpnf $(m,m,m)$-group,
and $G^{(2)}/G^{(3)}$ is a factor group of $Z_m^2$.
But then there exists a characteristic subgroup  $K$ of $G^{(2)}$, so that $G/K$ has order
$12p^s$ for $s=1$ or $s=2$ and it is a gpnf $(2,3,3p)$-group.
Set $\widetilde{G}=G/K$ and $\widetilde{H}=G^{(2)}/K$.
Observe that $p \neq 2$ since a $(2,3,6)$-group has abelianization $Z_6$, by \cite{Mil,S}).
Furthermore $p \neq 3$. Indeed,  for $s=1$, $\widetilde{G}$ would act on a surface of genus
$2$, by  (\ref{area}) and  (\ref{RH}),  contrary to the classification of Broughton
\cite{Br} suplemented by the corrections concerning the $(2^3,4)$-actions
provided by Bogopolski in  (\cite{Bog}, page 15).  In turn for  $s=2$
such a group would act on a surface of genus $4$, contrary to the classifications
of Bogopolski of such actions given in the aforementioned paper  \cite{Bog} (see also the
later paper of Kimura \cite{Kim} with the same result).
Finally, assume,  to derive a contradiction,  that $p\geq 5$.
Then $\widetilde{G}$  acts on the set of subgroups of $\widetilde{H}$ of order $p$ by
conjugation. This action  induces  natural action of $A_4=\widetilde{G}/\widetilde{H}$ on
$\widetilde{H}$  with the same orbits.  Indeed, given   $g,g'\in \widetilde{G}$ for which
$g\widetilde{H}=g' \widetilde{H}$, we have $g'=gh'$ for some   $h' \in \widetilde{H}$, and so,
since  $\widetilde{H}$ is abelian, $ghg^{-1}=g'hg'^{-1}$ for any   $h \in \widetilde{H}$.
 On the other hand,  the orbit of such action has length at most $4$, being the index of
 the stabilizer of $\langle  (ab)^3\rangle$  in $A_4$. Hence
$p+1 \leq 4$, and therefore  $p =2$ or $p =3$, which were just ruled out.

Finally, suppose that $G_{\rm ab}=Z_6$.
Then neither $x$ nor $y$, and no their conjugates belong to $G'$. So since $G$ is a gpnf
$(2,3,n)$-group
$$
G' \subseteq \bigcup_{g\in G} \; \langle xy \rangle ^g.
$$
Therefore, $G'$ is solvable by  Lemma \ref{one cyclic}.
But since $G/G' =Z_6$,  we actually have
$$
G'=\bigcup_{g\in G} \; \langle (xy)^6 \rangle ^g.
$$
Finally, for $x=[a,b], y=[b^2,a]$, we have   $(ab)^6=[x,y]\in G^{(2)}$, and so
$G'=G^{(2)}$, which means that  $G=Z_6$, which is not our case. This completes the proof of
this case and the entire proof of the theorem.
\end{proof}

\section{Accola-Maclachlan lower bound for the order of gpnf-actions}
In this section we prove that $\mu(g)  \geq 8g$ for arbitrary even $g$, and that this bound is
exact for infinitely many $g$. We also discuss the odd genus case. Therefore we divide the section
into two parts according to the parity of $g$. We begin with

\subsection{The even genus case.}

\begin{lem}\label{geq 8g} There exists a gpnf-action of order $8g$ on a surface of arbitrary
even genus $g$.
\end{lem}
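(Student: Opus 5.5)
The natural candidate is a triangle signature whose area produces exactly $8g$. I would use the signature $(2,4,4g)$. By (\ref{area}) and (\ref{RH}), a smooth epimorphism $\theta$ from a Fuchsian group $\Delta$ of this signature onto a group $G$ of order $N$ gives a kernel of genus $\gamma$ with
$$
2\gamma-2=N\left(1-\tfrac12-\tfrac14-\tfrac1{4g}\right)=N\,\frac{g-1}{4g}.
$$
Hence $N=8g$ yields $\gamma=g$. Since $1-\frac12-\frac14-\frac1{4g}>0$ for $g\geq 2$, Remark \ref{g-p vs action} shows that it suffices to construct a gpnf $(2,4,4g)$-group of order $8g$.

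For the group I take the metacyclic group
$$
G=\langle\, a,b \,:\, a^{4g}=b^2=1,\; bab=a^{r}\,\rangle,\qquad r=2g-1.
$$
This is a genuine semidirect product $Z_{4g}\rtimes Z_2$ of order $8g$, because $r^2=4g^2-4g+1\equiv 1 \pmod{4g}$. Put $x=b$, $z=a$ and $y=x^{-1}z=ba$, so that $xy=z^{\pm1}$ after adjusting to the relation $x_1x_2x_3=1$.

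The orders are as required. We have $(ba)^2=a^{r+1}=a^{2g}$, which has order $2$, so $y$ has order $4$. Also $x$ has order $2$ and $z$ has order $4g$. By Lemma \ref{smooth}, $\theta$ is therefore smooth, and its kernel has genus $g$ by the computation above.

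Next I verify the gpnf property via Corollary \ref{qp-gpnf}. Every element of $\langle a\rangle$ is already a power of $z$. For the coset elements $ba^k$, a direct computation gives:
\begin{itemize}
\item $(ba^k)^2=a^{(r+1)k}=a^{2gk}$, so $ba^k$ is an involution when $k$ is even and has order $4$ when $k$ is odd;
\item conjugation by $a^j$ sends $ba^k$ to $ba^{k+(r-1)j}=ba^{k+(2g-2)j}$, and conjugation by $b$ sends it to $ba^{rk}$.
\end{itemize}
Consequently the conjugacy classes of the elements $ba^k$ are indexed by $k$ modulo $\gcd(2g-2,4g)$. Here the parity of $g$ enters: for even $g$ the number $g-1$ is odd, so
$$
\gcd(2g-2,4g)=2\gcd(g-1,2g)=2\gcd(g-1,2)=2 .
$$
Thus there are exactly two classes in the coset $b\langle a\rangle$. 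Those $ba^k$ with $k$ even are conjugate to $b=x$, and those with $k$ odd are conjugate to $ba=y$. Hence every element of $G$ is conjugate to a power of $x$, $y$ or $z$, and the action is gpnf.

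The only real obstacle is choosing the group so that the non-cyclic coset splits into just two conjugacy classes, and this is exactly what forces $g$ to be even. For odd $g$ one gets four classes in that coset, and the construction fails. The remaining verifications are routine.
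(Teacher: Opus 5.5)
Your proof is correct and is essentially the paper's argument. You use the same group $Z_{4g}\rtimes Z_2$ with $bab=a^{2g-1}$ and the same $(2,4,4g)$ generating triple. In both versions the gpnf property comes down to the coset $b\langle a\rangle$ splitting into exactly two conjugacy classes, which happens because $\gcd(2g-2,4g)=2$ for even $g$. The only difference is bookkeeping: the paper gets those two classes from $G'=\langle z^2\rangle$ having order $2g$ and the centralizers of $x,y$ having order $4$, so that $x^G=xG'$ and $y^G=yG'$, whereas you compute the conjugation orbits directly.
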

\begin{proof}
Consider the group $G$ with the presentation
\begin{equation}\label{atain 8g}
\langle \, x,\,y,\, z\,:\,  x^2,\ y^4, z^{4n}, xyz,\ y^2z^{2n}\, \rangle.
\end{equation}
Then
 $xzx= x(y^{3}x)x=xyy^2=z^{2n-1}$  and so  $\langle \, z \, \rangle$ is a normal subgroup,
 which together with $x$ generates $G$. In particular $|G|=8n$ and $G$ is a $(2,4,4n)$-group.
 Thus, by the Riemann-Hurwitz formula, it acts on a Riemann surface of genus $g=n$ and we shall
show that it is a gpnf-action for even $n$.
For that, we need to show that each element of $G$ is conjugated to a power of $x$, $y$, or $z$.
Now, $[z^{-1},x]=z^{2n-2} $ has order $2n$, taking into account that $n$ is even. Hence
$|G:G'|=4$ and $G/G'\cong Z_2^2$. It is easily seen that the centralizers of $x$ and $y$
are equal to $\langle \, x,\,z^{2n}\,  \rangle$
and $\langle \, y\,  \rangle$ respectively. So the conjugacy classes $x^G$ and $y^G$  of
these elements have size $2n$.
But for arbitrary $a, b\in G$  $aba^{-1}= b[b^{-1},a]\in bG'$. So $x^G=xG', y^G =yG'$ since
these cosets also have cardinality  $2n$ and therefore
$$G=xG'\cup yG'\cup \langle \, z\,  \rangle\subseteq \langle \, x \,  \rangle^G \cup \langle \, y^G \,  \rangle\cup \langle \, z \,  \rangle^G$$
as claimed.
\end{proof}

\begin{rk}\label{why M and not A} \rm
Our next goal is to prove that there are infinitely many even $g$ for which there are no
gpnf-actions of order greater than $8g$. The papers \cite{A,M} mentioned in the Introduction
are clearly related to this purpose
since they prove that there are infinitely many $g$ for which a surface of genus  $g$
has no more than $8g+8$ self-homeomorphisms.
However, from the perspective of our task,  they essentially differ.
In the first,   these $g$ are odd while in the second,   they are even.  Hence,  only the second
one concerns our case. Additionally, it is worth mentioning that the  $g$'s from  \cite{M} have
the  form $89p+1$, where $p$ is a prime satisfying a number of conditions. Therefore to achieve
our goal, it is enough to prove the following two lemmas.
\end{rk}

\begin{lem}\label{8g+8 is not gpnf}
There are no surfaces of genus $g$ allowing gpnf-actions of order $8g+8$.
\end{lem}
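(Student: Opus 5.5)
The approach is to turn the order condition into a condition on the signature of $\Delta$, and then rule out each possible signature. Suppose $G=\Delta/\Gamma$ acts on $X_g$ with $|G|=8(g+1)$. By (\ref{area}) and (\ref{RH}), the reduced area $\alpha=m(\Delta)/2\pi$ satisfies
$$\alpha=\frac{2(g-1)}{8(g+1)}=\frac{g-1}{4(g+1)}<\frac14 .$$
Only few signatures have reduced area below $1/4$.

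\emph{Step 1: listing the signatures.} If $h\ge1$, or if $r\ge5$, then $\alpha\ge 1/2$. For $r=4$ the only candidates are $(2,2,2,n)$. Here $\alpha=1/2-1/n$ gives $n=4(g+1)/(g+3)$, so $n=3$, $g=3$ is the only possibility; this is a single $(2,2,2,3)$-group of order $32$. For triangle signatures $(l,m,n)$ we need
$$1/l+1/m+1/n=(3g+5)/(4g+4).$$
By Theorem \ref{arbitrary}, $(2,3,n)$ is excluded.
\begin{itemize}
\item $(2,4,n)$ forces $n=2g+2$, which is the Accola--Maclachlan family.
\item $(2,5,n)$, $(2,6,n)$, $(2,7,n)$, $(3,3,n)$ and $(3,4,n)$ each give an explicit rational expression for $n$ in terms of $g$. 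Each allows only finitely many (small) $g$; for example $(3,3,n)$ requires $n=12(g+1)/(g+7)$.
\item All remaining triples have $1/l+1/m+1/n\le 3/4$, so they are impossible.
\end{itemize}
I will enumerate these sporadic pairs (signature, $g$) explicitly. They all have small genus, so they can be excluded either from the classifications of Broughton \cite{Br} and Bogopolski \cite{Bog} used earlier, or by a short direct check of the gpnf property via Corollary \ref{qp-gpnf}.

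\emph{Step 2: the $(2,4,2g+2)$ case.} This is the main obstacle. Let $x,y,z$ be the images of the generators, of orders $2,4,2g+2$ with $xyz=1$. Then $\langle z\rangle$ has index $4$ in $G$. First I would show that $\langle z\rangle$ is normal, by looking at the action of $G$ on the cosets of $\langle z\rangle$. If it were not normal, $G$ would map onto a transitive subgroup of $S_4$ generated by images of orders dividing $2$ and $4$ with product of order dividing $2g+2$. I would try to exclude this using the gpnf property on the kernel, in the style of Lemma \ref{one cyclic}. Once $\langle z\rangle\lhd G$, set $N=\langle z\rangle$, so that $G/N$ has order $4$ and is generated by the images of $x$ and $y$.

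Then I count. By the gpnf property, every element of $G\setminus N$ is conjugate to $x$, to $y$, to $y^{-1}$ or to $y^2$. The element $y^2$ lies in $N$, since $y^2$ has order $2$ in $G/N$ of order $4$ generated by $\bar y$; indeed $\bar y$ generates $G/N\cong Z_4$, as $\bar x=\bar y^{-1}$. Hence the coset $yN$, of size $2g+2$, must be covered by $y^G$ alone. The coset $y^{-1}N$ must likewise be covered by $(y^{-1})^G$, and the coset $y^2N=N$ is fine. Now $\bar x=\bar y^{-1}$ means that $x$ itself lies in the coset $y^{-1}N$, whose elements all have order $4$ because they are conjugates of $y^{-1}$. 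This contradicts $x^2=1$.

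The delicate point is precisely the normality of $\langle z\rangle$, or more generally showing that $G/{\rm core}(\langle z\rangle)$ is cyclic of order $4$. If instead $G/{\rm core}(\langle z\rangle)$ is $D_4$ or $S_4$, I would run the analogous coset count with $N$ replaced by the core, using $G/G'$ as in the proof of Lemma \ref{geq 8g}. For instance, when $G/G'\cong Z_2^2$, the coset $zG'$ must consist of elements conjugate into $\langle z\rangle$; comparing class sizes $|x^G|,|y^G|\le 4g+4$ with the sizes of the cosets $xG'$ and $yG'$ should then yield a contradiction.
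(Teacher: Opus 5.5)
Your proposal has a genuine gap in Step~2, exactly at the point you call delicate: $\langle z\rangle$ is \emph{never} normal in a $(2,4,2g+2)$-group of order $8(g+1)$. If $N=\langle z\rangle$ were normal, then $\bar x=\bar y^{-1}$ in $G/N$, so $G/N=\langle\bar y\rangle$ with $\bar y^{2}=\bar x^{-2}=1$, contradicting $[G:N]=4$. The contradiction you reach in that case does not need the gpnf hypothesis, so it only shows this case is empty.

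Since $\langle z\rangle$ is not normal, the transitive image of $G$ in $S_4$ has a nontrivial cyclic point stabilizer. This rules out $Z_4$, $Z_2^2$ and $S_4$. Also $A_4$ is not generated by an involution and an element of order dividing $4$. So the image is $D_4$, i.e.\ ${\rm core}\langle z\rangle=\langle z^2\rangle$ and $G/\langle z^2\rangle\cong D_4$. This is what happens for the Accola--Maclachlan group itself.

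So all the content lies in the case you only sketch, and the sketch looks in the wrong place. For the Accola--Maclachlan group with $g$ even, $G/G'\cong Z_2^2$, $x^G=xG'$, $y^G=yG'$, and $zG'$ is covered by conjugates of odd powers of $z$. Comparing class sizes with these cosets therefore gives nothing. The failure is inside $G'$: it is cyclic of order $2g+2$, and its generators are conjugate to no power of $x$, $y$ or $z$. For odd $g$ the obstruction is $G_{\rm ab}\cong Z_2\times Z_4$. This is the paper's argument. It is applied after quoting the Accola--Maclachlan/Kulkarni classification to reduce to the presentations (\ref{a-mcl}) and (\ref{kulk}); the Kulkarni group is handled through its Accola--Maclachlan quotient.

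Your classification-free route can be rescued as follows. $C_G(\langle z^2\rangle)$ is normal and contains $z$, so its image in $D_4$ contains the normal closure of the reflection $\bar z$, hence $\bar y^2$. Thus $y^2$ commutes with $z^2$. Since $\bar x$ and $\bar z$ are non-central in $D_4$, the gpnf property forces every element of $y^2\langle z^2\rangle$ to be conjugate to $y^2$, hence to be an involution. But $(y^2z^2)^2=z^4\neq 1$.

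Step~1 is also incomplete, because the sporadic signatures are not confined to small genus. For $(2,5,n)$ you need $(g+11)\mid 200$, which allows $(2,5,19)$ at $g=189$. Similarly, $(2,7,9)$ arises at $g=125$, and $(2,6,11)$, $(3,3,11)$ at $g=65$. Broughton and Bogopolski cover only $g\le 4$, and no direct check is actually supplied. These cases are not vacuous: ${\rm SL}(2,5)$ is a $(3,4,5)$-group of order $120=8\cdot 15$ acting on genus $14$.

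Lemma~\ref{33n & 344} would dispose of $(3,3,n)$ and $(3,4,4)$, but $(2,5,n)$, $(2,6,n)$, $(2,7,n)$ and $(3,4,5)$ still need arguments. The paper avoids them entirely by quoting the classification of actions of order $8g+8$. Finally, $(2,2,2,3)$ gives $g=5$ and order $48$, not $g=3$ and order $32$.
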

\begin{proof} By  \cite{A,M} and \cite{Kul} there are only two actions of order $8g+8$  on a surface
of genus $g$. They correspond  to two groups $G=G_g$ and $K=K_g$ of order $8(g+1)$ with the
following presentations
\begin{equation}\label{a-mcl}
\langle \, x,y \,:\,   x^2, y^4, (xy)^{2(g+1)}, (xy^2)^2 \,  \rangle
\end{equation}
for arbitrary $g$  and
\begin{equation}\label{kulk}
\langle \, x,y \,:\,   x^2, y^4, (xy)^{2(g+1)},   y^2(xy)y^2(xy)^g\,  \rangle
\end{equation}
if additionally  $g \, \equiv \,  3 \, (4)$. We shall show that none of these  actions  has
the gpnf property. For, observe that
$y^2$ is a central element of the group $G$ given in (\ref{a-mcl}),
$z=(xy)^2y^2  =[x,y]$ and  $z^x=z^y=z^{-1}$. Hence $G'$ is a cyclic  group of order $n=g+1$
or $n=2(g+1)$. In the first case $G_{\rm ab} = Z_2 \times Z_4$, which is not a gpnf-group,
while in the second case, $z$ is neither  conjugated to a power of $y$ nor to  a power of  $xy$,
so $G$ is not a gpnf-group.

Now the group $K$ given by (\ref{kulk}) has order $8(g+1)$ by \cite{Kul} and for the normal
closure  $N$  of $(xy)^n$ in $K$, we have
$$
K/N  =
\langle \, x,y \,:\,   x^2, y^4, (xy)^{n},   y^2(xy)y^2(xy)^{-1}\,  \rangle
$$
which is simply $G_{n/2}$ from the previous case, and hence also  $K$ is not a gpnf-group.
\end{proof}

 \begin{lem}\label{between 8g and 8g+8}
 There are no  gpnf-actions  of  groups $G$ of orders $8g+s$ on  surfaces  of genera
 $g=89p+1$ for arbitrary   $1\leq s \leq 7$ and any prime $p>84\!\cdot \!89$.
  \end{lem}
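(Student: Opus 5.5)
The plan is a Riemann--Hurwitz computation combined with the signature analysis that underlies Maclachlan's argument. Suppose $G$ of order $N=8g+s$, $1\le s\le 7$, acts on $X_g$ with $g=89p+1$, via a smooth epimorphism $\theta:\Delta\to G$. By (\ref{area}) and (\ref{RH}), $N=4\pi(g-1)/m(\Delta)$, so
\[
\frac{m(\Delta)}{2\pi}=\frac{2(g-1)}{8g+s}<\frac14 .
\]
Hyperbolic signatures with normalized area below $1/4$ are very restricted. The orbit genus must be $h=0$, and for $r\ge 4$ the minimal area is $1/6$ at $(2,2,2,3)$; areas between $1/6$ and $1/4$ occur only for $(2,2,2,m)$ with $m\le 5$ or so. These give orders of size $\le 12(g-1)$, a bounded ratio to $g-1$, and they can be tested directly against the congruence conditions below. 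Otherwise $\Delta$ is triangular, $(l,m,n)$. The signatures $(2,3,n)$ are excluded at once by Theorem \ref{arbitrary}, whatever the order. So the main case is $(2,m,n)$ with $m\ge4$, or $(3,3,n)$ and similar.

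Next I will use the arithmetic of $g-1=89p$. We have $N\cdot m(\Delta)/(2\pi)=2(g-1)=178p$ with $N=8g+s=712p+8+s$. Writing $m(\Delta)/(2\pi)=A/L$, where $L=\mathrm{lcm}$ of the periods, gives $NA=178pL$. Since $p>84\cdot 89$ is a large prime and $N\equiv 8+s\pmod p$ is nonzero mod $p$, $p$ must divide $L$. Hence some period $n$ is divisible by $p$, and $G$ contains an element of order $p$. Sylow counting then applies, since $|G|<720p<p^2$: the Sylow $p$-subgroup $P$ is cyclic of order $p$, and the number of Sylow $p$-subgroups is $\equiv1\pmod p$ and divides $N/p<720$, so it is $1$. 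Thus $P\trianglelefteq G$ and $G/P$ has order $N/p$. Since $p$ is large, $N/p$ is not an integer unless $p\mid (8+s)$, which is impossible because $8+s\le 15<p$. The main obstacle is to make this rigorous: I have to check precisely that the equation $NA=178pL$ forces $p\mid N$. Only one period can carry the factor $p$, since $p^2$ would make the group too large. Then, writing that period as $pn'$ and clearing denominators, I need to show $p$ divides $N$ itself. This requires the inequality $m(\Delta)/2\pi<1/4$ to pin $A/L$ close to $1/4$, so that the denominator $p$ cannot cancel. I will try to do this by listing the possible triangle signatures $(2,m,pn')$ and $(3,3,pn')$ with area in $[\frac{2(g-1)}{8g+7},\frac{2(g-1)}{8g+1}]$. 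For $p$ large, such an area is $\frac14-O(1/p)$. This forces $(2,4,pn')$ with area $\frac14-\frac1{pn'}$, or $(3,3,pn')$ with area $\frac13-\frac1{pn'}$, which is too big. In the first case the order is $8pn'(g-1)/(pn'-4)$, and integrality for $g-1=89p$ together with $N\in[8g+1,8g+7]$ forces $pn'-4\mid 712pn'$. From this $pn'-4$ divides $2848$, which is impossible for $p>84\cdot 89$.

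The argument so far rules out every signature except the few non-triangular ones of bounded area ratio. For those, $N=c(g-1)$ with $c\in\{12,\dots\}$ is far from $8g+s$. Indeed $N/(g-1)\approx 8$ would be needed, while for these signatures the ratio $2/(m(\Delta)/2\pi)$ is at least $8$ only when $m(\Delta)/2\pi\le 1/4$. Matching the ratio exactly to $(8g+s)/(89p)$ again forces $p$ into the denominator of the area and gives a contradiction. Hence no such $G$ exists, gpnf or not, apart from the order $8g+8$, which is handled by Lemma \ref{8g+8 is not gpnf}. This mirrors Maclachlan's original argument and does not require the gpnf hypothesis except to discard $(2,3,n)$ via Theorem \ref{arbitrary}.
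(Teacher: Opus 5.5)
There is a genuine gap at the step ``$p$ must divide $L$''. From $NA=178pL$ and $N=712p+8+s\not\equiv 0 \pmod p$ you can only conclude that $p\mid A$. So $p$ divides the \emph{numerator} of the normalized area, not the least common multiple of the periods.

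If some period were divisible by $p$, you would be done at once: a period is the order of $\theta(x_i)\in G$, so $p\mid N$, which is false. Hence your Sylow count and the computation $pn'-4\mid 2848$ take place inside a case that is empty from the start.

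The case that carries all the content is $(2,4,n)$ with $p\nmid n$. For instance, the Accola--Maclachlan group of order $8g+8$ on genus $g=89p+1$ has signature $(2,4,178p+4)$. There $p$ divides $n-4$ (and $A=89p$), not $n$. Your proposal never treats this family. The finitely many other triangle signatures ($(2,5,n)$, $(2,6,n)$, $(2,7,n)$, $(3,3,n)$, $(3,4,n)$) are also only dismissed through the same mistaken ``$p$ in the denominator'' step.

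The repair is to drop the divisibility step and use the exact area you already wrote down, $m(\Delta)/2\pi=\frac14-\frac{8+s}{4(8g+s)}$. The hyperbolic signatures of area below $1/4$ then split as follows.
\begin{itemize}
\item $(2,2,2,3)$ and $(2,3,n)$ have area at most $1/6$, so $|G|\ge 12(g-1)>8g+7$.
\item The finitely many triangle signatures other than $(2,3,n)$ and $(2,4,n)$ have area at most $\frac14-\frac1{380}$. So $|G|=c(g-1)$ with a fixed $c>8$, and $(c-8)\cdot 89p=8+s\le 15$ is impossible for $p>84\cdot 89$.
\item For $(2,4,n)$ one has $|G|=8n(g-1)/(n-4)=8(g-1)+32(g-1)/(n-4)$. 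Setting this equal to $8g+s$ gives $32\cdot 89p=(8+s)(n-4)$. This fails because no integer in $\{9,\dots,15\}$ divides $2^5\cdot 89\cdot p$. (For $s=8$ it gives exactly $n=2(g+1)$.)
\end{itemize}

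This is essentially the paper's route: reduce to triangle signatures with $|G|>8(g-1)$ and do direct arithmetic on $|G|=8g+s$, with no divisibility of periods by $p$ involved. The paper writes the $(2,4,n)$ order as $4n(g-1)/(n-4)$. The correct value is $8n(g-1)/(n-4)$, and it yields the divisibility obstruction above rather than a bound on $n$.
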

\begin{proof}
 First, we shall show that a group  $G$  admitting such action is a smooth factor
 group $\Delta/\Gamma$ for a Fuchsian triangle group $\Delta$, say with signature $(k,m,n)$.
Indeed, if this is not the case, then the orbit genus $h$ of $\Delta$ is non-zero or $r\geq 4$.
If $h\neq 0$ or $h=0, r\geq 5$  then  $|G| \leq 4(g-1)$  by the Riemann-Hurwitz  formula.
Therefore let  $h=0$ and $r=4$.
However, if at least two periods of $\Delta$ are greater than $2$, then  $|G| \leq 6(g-1)$.
Thus, it remains to consider $\Delta$ with signature $(0;2,2,2,n)$, where $G= (4n/(n-2))(g-1)$.
But for $n=3$, $|G|=12(g-1)$, while for $n>4$, $|G| \leq 8(g-1)$, completing this part of
the proof.
By the gpnf-property of $G$ any prime divisor of $|G|$ divides  one of  $k,l,m$, and without
loss of generality, we can assume that $k\leq m \leq n$.

Now, observe that for  $k\geq 4$, $|G| \leq 8(g-1)$ by the Riemann-Hurwitz formula.
 So let $k=3$. Then for $m\geq 5$, $|G| \leq (15/2)(g-1)$. For $m=4$,
 $|G|=24n(g-1)/(5n-12)$ and  thus for  $n \geq 6$,
 $|G| \leq 8(g-1)$.  For $n= 5$,
$|G| =(120/13)(g-1)$
which cannot be the case given the form of $g$, while for $n=4$, $|G|=12(g-1)$ which
is also not possible.

Therefore we can assume that $k=2$
and $3\leq m \leq n$.
Then
\begin{equation}\label{2mn}
 |G|=  \dfrac{4mn}{(m-2)(n-2) -4}(g-1)
\end{equation}

\medskip
\noindent
For $m=3$  we obtain
$$
|G|=\dfrac{12n}{n-6}(g-1)\geq 12(g-1)>8(g+1),
$$
but the assertion also follows directly from Theorem \ref{arbitrary}.

\medskip 
\noindent
For $m=4$
$$
|G|=\dfrac{4n}{n-4}(g-1)
$$
which cannot be equal to $8g+s$ since otherwise
$$
(32-4n)(g-1)= (8+s)(n-4)
$$
which is not the case for any $s$ since $4<n<8$, while $g-1>89p$.

\medskip 
\noindent
Finally, suppose that  $5 \leq m \leq n$.
Then for $n\geq 20$,
(\ref{2mn}) gives
$|G|\leq 8(g-1)$. So let $5 \leq m \leq n <20$. Then on the one hand, $(m-2)(n-2)-4$ does not
divide $4mn$. On the other hand
 $(m-2)(n-2)-4 < 320$,  which is impossible  since $(m-2)(n-2)-4 $ divides
 $356pnm$.
 \end{proof}

All of these, taken together,  give the following counterpart of the Accola-Maclachlan
result for gpnf-actions on surfaces of even genera.

\begin{thm}\label{A-M for even gpnf}
 If $G$ is the largest  gpnf-action on a topological surface of even genus $g\geq 2$, then
 $|G| \geq 8g$. Moreover, this lower bound is exact for infinitely many even $g$.
 In other words,  there are infinitely many even  $g$ for which there are  no gpnf-action
 on a surface of genus $g$ of order greater than $8g$.
\end{thm}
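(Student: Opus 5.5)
The theorem will follow by assembling Lemma \ref{geq 8g}, Lemma \ref{8g+8 is not gpnf} and Lemma \ref{between 8g and 8g+8} with the Maclachlan bound.

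\emph{Lower bound.} Let $g\geq 2$ be even. Lemma \ref{geq 8g}, applied with $n=g$, gives a gpnf-action of order $8g$ on a surface of genus $g$. Hence the largest gpnf-action on such a surface has order at least $8g$.

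\emph{Sharpness.} Take $g=89p+1$, where $p>84\cdot 89$ is a prime satisfying Maclachlan's conditions from \cite{M}. Since $p$ is odd, $89p$ is odd, so $g$ is even. There are infinitely many such $g$, by the infinitude of admissible primes in \cite{M}. For these $g$, Maclachlan shows $\nu(g)=8g+8$, so every finite action on a surface of genus $g$ has order at most $8g+8$. The order of a gpnf-action therefore lies in $\{8g+1,\dots,8g+8\}$ if it exceeds $8g$. Order $8g+8$ is excluded by Lemma \ref{8g+8 is not gpnf}. Orders $8g+s$ with $1\leq s\leq 7$ are excluded by Lemma \ref{between 8g and 8g+8}. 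Thus $\mu(g)=8g$ for all these $g$.

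\emph{Main obstacle.} The only delicate point is that the primes used in Lemma \ref{between 8g and 8g+8} must be compatible with Maclachlan's hypotheses. This is handled by choosing $p$ from Maclachlan's infinite family and additionally requiring $p>84\cdot 89$; this discards only finitely many primes, so infinitely many remain. I would also check that Maclachlan's family genuinely yields $\nu(g)=8g+8$ exactly, not merely some upper bound. This is what Remark \ref{why M and not A} asserts, so I would cite it directly.
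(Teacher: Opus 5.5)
Your proposal is correct and is essentially the paper's proof. Lemma~\ref{geq 8g} gives the lower bound; for $g=89p+1$, Maclachlan's bound caps all orders at $8g+8$, and Lemmas~\ref{8g+8 is not gpnf} and~\ref{between 8g and 8g+8} then eliminate the orders $8g+8$ and $8g+s$, $1\le s\le 7$. Your explicit choice of $p$ from Maclachlan's admissible family with $p>84\cdot 89$, and your check that $g$ is even, only make precise what the paper leaves implicit; note also that just the upper bound $\nu(g)\le 8g+8$ is actually used.
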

\begin{proof}
The first part follows from  Lemma \ref{geq 8g}.
By Lemma \ref{8g+8 is not gpnf}, there are no gpnf-actions of order $8(g+1)$.
Let now assume that $g=89p+1$, where $p>84 \cdot 89$ is a prime integer.
By Remark \ref{why M and not A}, there are no conformal actions of order bigger than $8(g+1)$ \cite{M}, and by Lemma \ref{between 8g and 8g+8}, 
there are no gpnf-actions of order $8g+s$  for $s = 1, \ldots, 7$ on a surface of genus $g$.
\end{proof}

\begin{ex}[Geometric models of gpnf-actions of order $8g$]\label{model 8g}\rm
Let $g \geq 2$, and let $S$ be the closed hyperelliptic Riemann surface of genus $g$ defined
by the  equation
$$
 w^{2}=z(z^{2g}-1).
$$
 This surface admits the following conformal automorphisms:
$$
 U(z,w)=(e^{\pi i/g}/z,-w/z^{g+1}), \; V(z,w)=(e^{\pi i/g}z,e^{\pi i/2g}w).
$$
of orders $2$ and $4g$ satisfying $UVU=V^{2g-1}$.
Additionally, these relations define, as considered in Lemma \ref{geq 8g}, the generalized
quasi-dihedral group
$$
G=\langle \, U,V \,:\,  U^{2},V^{4g}, UV^{-1}UV^{2g-1} \,\rangle
$$
of order $8g$ with the orbit space $S/G$ having signature $(0;2,4,4g)$. For $g \geq 3$ it is
known that ${\rm Aut}(S)=G$. If $g \geq 2$ is even, then this conformal action of $G$  has
the gpnf-property.
\end{ex}

\subsection{The odd genus case}
Here, for arbitrary $g$ we construct a gpnf-action of order  $4g$.  We present certain
 arguments for why $4g$  may be the value of $\mu$  for infinitely many odd $g$.  Finally, we
 provide some reasons why a proof that this is actually the case, cannot be easy.

\begin{lem}\label{8g for odd}
The  group $G$ with the presentation  {\rm (\ref{atain 8g})} from the proof of
{\rm  Lemma \ref{geq 8g}}, which was  the one giving a gpnf-action of order $8g$ for even $g$,
also has order $8g$ for odd  $g=n$, but it has  no gpnf property for  such $g$.
\end{lem}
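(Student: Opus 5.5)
The plan is to reuse the computation from the proof of Lemma \ref{geq 8g} for the order, and then pass to the abelianization to destroy the gpnf property when $n$ is odd. First, the relations in (\ref{atain 8g}) still give $xzx=x(y^{3}x)x=xy\,y^2=z^{2n-1}$. This step used no parity assumption on $n$. Hence $\langle z\rangle$ is normal of order $4n$ and $G=\langle z\rangle\rtimes\langle x\rangle$. This yields $|G|=8n$ and, by the Riemann--Hurwitz formula for signature $(2,4,4n)$, genus $g=n$. So $|G|=8g$ also for odd $g$. I would check briefly that $x\notin\langle z\rangle$, i.e.\ that the order really is $8n$ and not $4n$, exactly as in the even case.

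Next I would compute $G'$ for odd $n$. As before, $[z^{-1},x]=z^{2n-2}$, and $G'$ is the normal closure of this element, which already lies in the normal cyclic subgroup $\langle z\rangle$. For odd $n$ we have $\gcd(4n,2n-2)=2\gcd(2n,n-1)=4$. Hence $z^{2n-2}$ generates $\langle z^4\rangle$, which has order $n$ and index $8$. In $G/G'$ the image $\bar z$ has order $4$ and $\bar x$ has order $2$. Moreover, $\bar x\bar z\bar x=\bar z^{\,2n-1}=\bar z$ because $2n-1\equiv 1 \pmod 4$. Thus $G_{\rm ab}\cong Z_2\times Z_4$.

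Finally, the gpnf property passes to quotients. If every element of $G$ were conjugate to a power of $x$, $y$ or $z$, then every element of the abelian group $G_{\rm ab}$ would lie in $\langle\bar x\rangle\cup\langle\bar y\rangle\cup\langle\bar z\rangle$. This would make $Z_2\times Z_4$ a union of three cyclic subgroups, contradicting Lemma \ref{union of cyclics}, which only allows $Z_2^2\times Z_k$ or $Q_8\times Z_k$ with $k$ odd. Alternatively, one can count directly: these subgroups have at most $4+1+2=7<8$ distinct elements. By Corollary \ref{qp-gpnf}, the action is therefore not gpnf.

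The only step needing any care is the gcd computation that makes $G'$ have index $8$ rather than $4$, and the verification that $\bar x$ and $\bar z$ commute modulo $G'$. Everything else is inherited from Lemma \ref{geq 8g}.
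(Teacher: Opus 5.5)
Your proposal is correct and follows essentially the paper's route: the paper also disposes of the gpnf property by showing $G_{\rm ab}\cong Z_2\times Z_4$. The paper reads this off the abelianized presentation, where $y^2z^{2n}$ and $z^{4n}$ become superfluous for odd $n$, whereas you identify $G'=\langle z^{4}\rangle$ through the gcd; your count $4+1+2<8$ also makes precise the paper's terse remark that $Z_2\times Z_4$ is ``not a gpnf-group''. The order check you defer is settled by the semidirect product $Z_{4n}\rtimes Z_2$ with $x$ acting by $z\mapsto z^{2n-1}$ (legitimate as $(2n-1)^2\equiv 1 \pmod{4n}$), in which $y=xz^{-1}$ satisfies $y^2=z^{-2n}$, so all defining relations hold and $|G|=8n$ with $x,y,z$ of exact orders $2,4,4n$.
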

\begin{proof}
Observe that $G/G'=\langle \, x,y,z\, : \, x^2,y^4, z^{4n},y^2z^{2n}, xyz, xyxy^{-1} \, \rangle = Z_2 \times Z_4 $
because the relations $y^2z^{2n}$ and $z^{4n}$ are superfluous, while $Z_2 \times Z_4$
is not a gpnf-group.
\end{proof}

The next proposition does not essentially approach us to finding the lower bound
for the orders of gpnf-actions on surfaces of odd genus, but it seems to be interesting
in itself in the context of the mentioned Accola-Maclachlan results. In \cite{A,M}, they proved
that  if $\mu (g)>8(g+1)$, then
$\mu (g) \geq  8(g+3)$ and also this bound is exact for infinitely many odd $g$. The next
proposition shows that also these actions have no gpnf property.

\begin{prop}\label{8(g+3) for odd}
The $(2,4,n)$-action of order $8(g+3)$ on a surface of genus $g$ corresponding to the presentation
$\langle \, x,y,z\,:\, xyz, x^2, y^4, z^n, (xz^3)^2 \, \rangle $
has no gpnf-property.
\end{prop}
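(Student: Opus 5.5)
The plan is to find a homomorphic image of $G$ in which the gpnf property visibly fails. The tool is the abelianization, used exactly as in Lemma \ref{8g+8 is not gpnf} and Lemma \ref{8g for odd}. The principle is that if $t$ is conjugate to a power of $x$, $y$ or $z$, then its coset modulo $G'$ is the image of that power. Hence, by Corollary \ref{qp-gpnf}, it suffices to exhibit a nontrivial element lying in a coset of $G'$ that contains no conjugate of any power of $x,y,z$ of the right order.

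First I will pin down $n$ from the Riemann--Hurwitz formula (\ref{area}), (\ref{RH}) with $|G|=8(g+3)$ and signature $(2,4,n)$. The intended outcome is that $n$ is even; this is what \cite{A,M} give for the odd genera in question, where $g$ is odd. Next I compute $G_{\rm ab}$. In $G_{\rm ab}$ we have $z=(xy)^{-1}$, so the relation $(xz^3)^2$ becomes $z^6=1$, that is $x^{6}y^{6}=y^2=1$. Together with $x^2=1$ and $z^n=1$ for $n$ even, this gives $G_{\rm ab}=Z_2^2$, generated by the images of $x$ and $y$, with $z\mapsto xy$. In particular the four cosets of $G'$ are $G'$, $xG'$, $yG'$ and $xyG'$.

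The key element is $w=xz^3$, which is an involution by the last relation. Its image in $G_{\rm ab}$ is $x\cdot (xy)^3=y$, so $w\in yG'$. I then list which powers of the generators can land in $yG'$. Powers of $x$ lie in $G'\cup xG'$. Powers of $z$ lie in $G'\cup zG'=G'\cup xyG'$. Only the odd powers $y,y^3$ of $y$ lie in $yG'$, and these have order $4$. Conjugation preserves both the coset of $G'$ and the order. So every element of $yG'$ that is conjugate to a power of $x$, $y$ or $z$ has order exactly $4$, while $w$ is a nontrivial element of order $2$ in $yG'$. This contradicts Corollary \ref{qp-gpnf}.

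It remains to check that $w\neq 1$, i.e.\ that $x\neq z^{-3}$ in $G$. If $x=z^{-3}$, then $G=\langle x,z\rangle$ would be cyclic, which is impossible for a group of order $8(g+3)$ acting with signature $(2,4,n)$ and $g\ge 2$, since $y=x^{-1}z^{-1}$ would be a power of $z$. The step I expect to be the real obstacle is the parity and exact value of $n$, together with the claim $|G_{\rm ab}|=4$ rather than $2$. If $n$ turned out odd, then $y\in G'$ up to $x$, and the coset argument would collapse. In that case I would instead pass to the quotient by the normal closure of $z^{2}$ (or of a suitable power of $z$). I would identify that quotient as a dihedral-type or $Z_2\times Z_4$-type group, as in the proof of Lemma \ref{8g+8 is not gpnf}, which is not gpnf. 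This suffices because the gpnf property passes to quotients.
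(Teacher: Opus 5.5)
Your main argument is correct, but it takes a different route from the paper. The parity question you leave open is settled by the Riemann--Hurwitz formula: $2g-2=8(g+3)\bigl(\tfrac14-\tfrac1n\bigr)$ gives $n=g+3$. So $n$ is even exactly for odd $g$, which is the setting of the proposition; the paper's proof also assumes $n$ even.

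Two small points on your write-up. First, $w\neq 1$ needs no separate check, since $w\in yG'\neq G'$. Second, your opening description is slightly off: $G_{\rm ab}\cong Z_2^2$ is itself gpnf for the induced generating triple, so you are not exhibiting a non-gpnf quotient. What actually works is combining the coset of $w$ with its order in $G$, and that combination is sound.

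The paper instead uses $y^2z^3$. Since $y$ inverts $z^3$, $y^2$ centralizes $z^3$, so for even $n$ the element $y^2z^3$ has the same order $n/3$ as $z^3$. It lies in no conjugate of $\langle z\rangle$, and its order is used to exclude conjugacy to powers of $x$ and $y$. Your coset argument is uniform in even $n$. In particular it covers $n=6$ and $n=12$ ($g=3,9$), where $y^2z^3$ has order $2$ resp.\ $4$, and the order comparison alone does not exclude conjugacy to $x$, $y^2$ or $y^{\pm1}$.

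The paper's element, on the other hand, also works for odd $n$, where your fallback fails. For odd $n$ the normal closure of $z^2$ contains $z$, so that quotient is just $Z_2$. Moreover $G_{\rm ab}\cong Z_2$, $G/\langle z^3\rangle\cong S_4$ and the dihedral quotient $D_n$ are all gpnf for the induced triples, so no quotient of the kind you suggest detects anything. What does work is $y^2z^3$ itself. Note $3\mid n$, as otherwise $x$ would invert $z$ and $y$ would have order at most $2$. Hence for odd $n$ the element $y^2z^3$ has order $2n/3\ge 6$ and maps to a double transposition in $S_4$. It is therefore neither conjugate into $\langle z\rangle$ nor to a power of $x$ or $y$. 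Since odd $g$ is the intended setting, this does not damage your proof, but the hedge as written should be dropped or replaced by this element.
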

\begin{proof} Clearly $y^{-1} z^3 y = z^{-3}$, and so $y^2$ commutes with $z^3$.  So if  $z^3$
has order greater than $2$, this is also the case for $y^2z^3$, Therefore, if $n$ is even,
then $z^3$ and $y^2z^3$ have the same orders. Now, clearly $y^2 z^3$ does not belong to
$\langle\, z \, \rangle $ and so it also does not belong  to any of its conjugations, which
means that  $\langle\, x \, \rangle, \langle\, y \, \rangle$ and   $\langle\, z \, \rangle $
do not cover the whole group  $G$.
\end{proof}

The above facts allowed us to realize  that to find    the lower bound for $\mu(g)$  with
odd $g$ we need to  study  gpnf-actions on surfaces of odd genus of smaller orders and we get

\begin{thm}\label{geq 4g}
Given $g\geq 2$, there is a gpnf-action of order $4g$ for arbitrary $g \geq 2$.
In particular $\mu (g) \geq 4g$ for odd $g$.
\end{thm}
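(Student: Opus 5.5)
The plan is to realize the required action by a cyclic group. For a cyclic group the gpnf property is automatic once some generator has a fixed point, because every element is then a power of that generator. So the whole task is to find the right signature and check the Riemann--Hurwitz count.

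Take $G=Z_{4g}=\langle t\rangle$ and let $\Delta$ be a Fuchsian group with signature $(0;2,4g,4g)$. This signature is hyperbolic for $g\geq 2$, since $1-\tfrac12-\tfrac{2}{4g}=\tfrac{g-1}{2g}>0$. Define $\theta:\Delta\to G$ by
$$
x_1\mapsto t^{2g},\qquad x_2\mapsto t,\qquad x_3\mapsto t^{-1-2g}.
$$
The product of the three images is $1$, as the presentation (\ref{presentation}) requires. The map is onto because $t$ is in the image.

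Next check orders so that Lemma \ref{smooth} applies. The element $t^{2g}$ has order $2$ and $t$ has order $4g$. The element $t^{-(2g+1)}$ also has order $4g$, since $\gcd(2g+1,4g)=1$: any common divisor divides $2(2g+1)-4g=2$, and $2g+1$ is odd. Hence $\theta$ is a smooth epimorphism and $\ker\theta$ is a surface group. By (\ref{area}) and (\ref{RH}), its genus $\gamma$ satisfies
$$
2\gamma-2 = 4g\Bigl(-2+\tfrac12+2\bigl(1-\tfrac1{4g}\bigr)\Bigr)=4g\cdot\tfrac{g-1}{2g}=2(g-1),
$$
so $\gamma=g$ and $G$ acts with order $4g$ on a surface of genus $g$.

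For the gpnf property, Corollary \ref{qp-gpnf} requires every element of $G$ to be conjugate to a power of some $\theta(x_i)$. Every element of $G$ is a power of $t=\theta(x_2)$, so the condition holds. The first claim is proved for all $g\geq 2$, and the inequality $\mu(g)\geq 4g$ for odd $g$ follows at once.

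Geometrically, this action is the cyclic subgroup $\langle V\rangle$ of Example \ref{model 8g} acting on $w^2=z(z^{2g}-1)$. Its quotient is branched over the images of $0$, $\infty$ and the roots of $z^{2g}=1$, with branching orders $4g$, $4g$ and $2$; this can be recorded as the model intended in Example \ref{model 4g}.

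The only step requiring care is the choice of signature. The natural guess $Z_2\times Z_{2g}$ of type $(2,2g,2g)$ fails the gpnf property, because an element $(1,b)$ with $b$ even is not a power of any of the three generators. This is why a single cyclic group of type $(2,4g,4g)$ is used instead. Once that choice is made, everything else is routine arithmetic.
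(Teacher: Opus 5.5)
Your proof is correct, but it takes a different route from the paper.

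The paper uses the dicyclic group $\langle x,y,z : xyz, x^4, x^2y^2, x^2z^n\rangle$ of order $4n$ as a $(4,4,2n)$-group. There the gpnf property has to be checked by counting. The relation $z^{-i}xz^{i}=z^{-2i}x$ gives $x^G=\{z^{2k}x\}$ with $n$ elements. One must then show that the remaining $n$ elements outside $\langle z\rangle$ are conjugate to powers of $x$ or $y$.

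You take $Z_{4g}$ of type $(2,4g,4g)$, where gpnf is immediate because $\theta(x_2)$ generates $G$. Only the check $\gcd(2g+1,4g)=1$ and the Riemann--Hurwitz count remain, and both are right.

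Your route is also more robust. The paper asserts that $x$ and $y$ are never conjugate. For odd $n$, which is exactly the case needed for odd $g$, one has $y=z^{n+1}x\in x^G$, and it is $x^{-1}=z^nx$ that supplies the second class. The paper's conclusion survives, but its stated reason does not.

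What the paper's example buys is a non-abelian gpnf-group in which the property is not automatic. Note also that the model in Example~\ref{model 4g} is the index-$2$ subgroup $\langle V^2,VU\rangle$. Yours is a different index-$2$ subgroup, $\langle V\rangle$ of Example~\ref{model 8g}, and your computation of its signature is correct.

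Your idea gives more than the statement. Take Wiman's group $Z_{4g+2}=\langle t\rangle$ of type $(2,2g+1,4g+2)$ with $x_1\mapsto t^{2g+1}$, $x_2\mapsto t^{2}$, $x_3\mapsto t^{-(2g+3)}$; again $\gcd(2g+3,4g+2)=1$. Riemann--Hurwitz gives genus $g$, so $\mu(g)\geq 4g+2$ for every $g\geq 2$, and Conjecture~\ref{conj 4g} cannot hold.

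There is one slip in your closing remark. A group of order $4g$ of type $(2,2g,2g)$ acts in genus $g-1$, not $g$, so $Z_2\times Z_{2g}$ was never a competitor. This does not affect your proof.
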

\begin{proof}
Observe that $G=\langle\, x,y,z \,:\, xyz, x^4,  x^2y^2, x^2z^{n}\,\rangle $
is a $(4,4,2n)$-group
of  order $4n$ and so by the Riemann-Hurwitz formula, it acts on a surface of genus $g=n$
and we shall show that it is a gpnf-group.
First, observe that $z^{-1}=xy$, and  $z =  y^{-1} x^{-1} =y^3 x^3= y(y^2x^2)x= yx$.
So
$x z x^{-1} =   xy = z^{-1}$,
 and therefore
 $z^{-i} x z^i =
z ^{-i} (x   z^i x^{-1})x=
z^{-2i}x$.
which means that the centralizer of $x$ in $G$  coincides with $\langle \, x \, \rangle $,
and hence  $x$ has exactly $n$  conjugations, and clearly the same holds for $y$.
But $x$ and $y$  are not conjugated in $G$
since they are not conjugated in $G/< z^2>$,  which is the quaternion  group of order $8$.
Finally  $x,y \not \in  \langle \, z \, \rangle $, and so their conjugacy classes lie out
of $\langle \,  z \, \rangle $ and having $n$  elements  together with $\langle \, z \, \rangle $ they cover $G$.
\end{proof}

To close the matter of gpnf-actions, it remains to show that there are  infinitely many odd
$g$ for which there are no gpnf-actions
of order greater than $4g$ on surfaces of genus $g$. This seems to be far from
being trivial.
In short, the point is that the spectrum of action orders ranging between
$4g$ and $8g$ to be considered is much wider than the spectrum of orders greater than $8g$ of
gpnf-actions to be eliminated, due to Lemma \ref{between 8g and 8g+8} and ready-to-use result
of   Accola-Maclachlan.
However, our efforts so far have led us to venture into the following.

\begin{conject}\label{conj 4g} \rm
 There are infinitely many $g$  for which $\mu(g) =4g$, which means that there are infinitely
 many odd $g$ for which there are no gpnf-actions of order greater than $4g$ (likely for all but
 finitely many or perhaps for all).
\end{conject}

\begin{ex}[Geometric models of gpnf-actions of order $4g$]\label{model 4g}\rm
 If, in  Example \ref{model 8g},   $g$ is odd then  the automorphism $UV^{2}$ has no fixed
 points. However,  there is a subgroup
$$H=\langle V^{2},VU\rangle$$ of $G$ of index $2$
which is a gpnf-action. Observe that  $H$ is a dicyclic group of order $4g$ as considered in
Theorem  \ref{geq 4g}.
\end{ex}

\section{The asymptotic of the function $\mu=\mu(g)$}
By  Theorem \ref{A-M for even gpnf}, $\mu(g)\geq 8g$    for arbitrary even $g$ and $\mu(g)=8g$
for infinitely many of them. The results of this chapter can be thought of as a reinforcement
of this theorem by providing a full description of the asymptotics of  $\mu$. We will carry out the
discussion based on the previous Theorem \ref{arbitrary} and the following Lemma concerning
all signatures distinct than $(2,4,n)$, which, by the Riemann-Hurwitz formula,  may eventually
lead to gpnf-actions of order greater than $8g$.

\begin{lem}\label{33n & 344}
There are no gpnf $(3,4,4)$-groups, gpnf $(3,3,n)$-groups for  $n=4,5,6,8,9,10,11$ and the
only gpnf   $(3,3,7)$-group is $\langle a,b\,: \,  a^3,b^7,aba^{-1}b^5 \rangle$.
\end{lem}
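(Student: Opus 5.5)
The plan is to argue by a minimal counterexample, following the scheme of the proof of Theorem \ref{arbitrary}: pass to the abelianization, then to the kernel, and use the solvability criterion of Lemma \ref{one cyclic} to reduce to elementary abelian minimal normal subgroups. All signatures involved are hyperbolic ($1/3+1/3+1/4<1$, $1/3+1/4+1/4<1$), so by Remark \ref{g-p vs action} gpnf-groups and gpnf-actions coincide, and the Riemann--Hurwitz formula (\ref{area}), (\ref{RH}) can be used freely to compute genera.

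\emph{The $(3,3,n)$ case.} Let $G=\langle a,b\rangle$ with $a^3=b^3=1$ and $c=(ab)^{-1}$ of order $n$.
\begin{enumerate}
\item By Lemma \ref{perfect gpnf} (the case $p=3$), $G$ is not perfect. Since $G_{\rm ab}$ is a quotient of $Z_3^2$, it is $Z_3$ or $Z_3^2$.
\item The case $Z_3^2$ is excluded. An abelian gpnf quotient must be covered by the three cyclic images of $\langle a\rangle,\langle b\rangle,\langle c\rangle$, and $Z_3^2$ is not a union of three cyclic subgroups by Lemma \ref{union of cyclics}.
\item So $G_{\rm ab}=Z_3$, and we identify the kernel $G'$ by Lemma \ref{sing}. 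If $c\in G'$ (forced when $3\nmid n$), then $G'$ is a gpnf $(n,n,n)$-group. Every element of $G'$ is $G$-conjugate to a power of $c$, so $G'$ is solvable by Lemma \ref{one cyclic}. If $3\mid n$ and $c\notin G'$, the same argument applies with $c^3$ in place of $c$.
\item Choose a minimal normal subgroup $H\le G'$ of $G$; it is elementary abelian, say $Z_p^s$. Because all cyclic subgroups of $H$ are $G$-conjugate to one subgroup of $\langle c\rangle$, we get
$$\frac{p^s-1}{p-1}\in\{1,3\}.$$
Here $3$ arises because $G/G'=Z_3$ acts on $H$, and the action of $G'$ on $H$ is controlled as in the $A_4$ and $A_5$ arguments earlier. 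Hence either $H\cong Z_p$, or $H\cong Z_2^2$.
\item Descending the derived series, as in the proof of Theorem \ref{arbitrary}, gives a short list of candidate groups of orders $3p$, $12$, $3p^2$, $\ldots$, each of which is a gpnf $(3,3,n')$-group with $n'\mid n$.
\item For each $n\in\{4,5,6,8,9,10,11\}$, check this list. For $n=5,11$ no element of order $3$ can act nontrivially on $Z_n$, since $3\nmid n-1$. For $n=4,8$ the $2$-part leads to $A_4$-type quotients, and one checks the gpnf condition or the orders of products directly. For $n=6,9,10$ use the genus computations, together with the classifications of Broughton and Bogopolski for small genus, exactly as was done earlier.
\item For $n=7$ the same analysis forces $G\cong Z_7\rtimes Z_3$, which is the stated group $\langle a,b\,:\,a^3,b^7,aba^{-1}b^5\rangle$. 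Here one checks that $\langle b\rangle$ together with the two classes of subgroups of order $3$ covers $G$.
\end{enumerate}

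\emph{The $(3,4,4)$ case.} Write $G=\langle a,b\rangle$ with $a^3=b^4=(ab)^4=1$.
\begin{enumerate}
\item In $G_{\rm ab}$ the image of $a$ has order dividing both $3$ and $4$, so it is trivial. Hence $G_{\rm ab}$ is a quotient of $Z_4$, and it is nontrivial by Lemma \ref{perfect gpnf}, so it is $Z_2$ or $Z_4$.
\item Take the kernel $N$ of the map onto $Z_2$. By Lemma \ref{sing}, $N$ is a gpnf group of type $(3,3,2,2)$. Passing further to $Z_4$ when available, we reach a subgroup generated by conjugates of $a$ in which all elements of odd order are conjugates of powers of $a$.
\item Analyse the Sylow $3$-subgroup and the $2$-part separately, as in the $p=3$ subcase of Theorem \ref{arbitrary}. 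Applying Riemann--Hurwitz to a Sylow $3$-subgroup $P$ gives a small equation in $h$ and $t$. This forces $P$ to be a non-abelian group whose abelianization would have to be covered by too few cyclic subgroups, which is a contradiction.
\end{enumerate}

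\emph{Main obstacle.} The hardest part is the case-by-case elimination for the individual $n$, especially $n=6,8,9,10$, where $3$ or $2$ divides $n$ and the kernel analysis allows several extension types. For these I would first try to bound $|G|$ via the orbit-counting step above, reducing to groups of order at most $3\cdot 7^2$ or similar. The remaining candidates would then be settled by Riemann--Hurwitz genus computations and the known low-genus classifications, or by the GAP small-groups library, as the paper already does elsewhere.
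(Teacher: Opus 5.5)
Your proposal has a genuine gap at its central step. The remaining steps describe checks to be made rather than carry them out. In Step 4 you take a minimal normal subgroup $H\cong Z_p^s$ of $G$ inside $G'$ and assert $\frac{p^s-1}{p-1}\in\{1,3\}$ ``because $G/G'=Z_3$ acts on $H$''. But the conjugation action of $G$ on the set of cyclic subgroups of $H$ factors through $G/G'$ only if $G'$ normalizes each of them (for instance, if $G'$ centralizes $H$). Nothing forces this for a minimal normal subgroup sitting at the bottom of $G'$.

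What you actually have is this: if the subgroups of order $p$ of $H$ are all conjugate to one $L\le\langle c\rangle$, their number is $[G:N_G(L)]$, and you only know $N_G(L)\supseteq H\langle c\rangle$. Since $G/H$ is unknown, this gives no bound. The $A_4$/$A_5$ arguments you cite worked precisely because the quotient by the abelian normal subgroup was already identified, so the stabilizer contained a subgroup of known small index. The paper's own $(3,3,4)$ case shows the $G'$-action genuinely matters: there the admissible orbit lengths are $1,2,3,6$, coming from $G/K\cong A_4$, not from $G/G'\cong Z_3$.

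The paper therefore works from the top down. It determines $G/G'$ and then shows the next layer is abelian. For example, for $n=p$ prime, $G'$ is a $p$-group whose subgroups of order $p$ are all $G$-conjugate, so its centre, being normal in $G$, contains them all. The action on cyclic subgroups then really does factor through a known small quotient ($Z_3$, $Z_4$, $D_3$ or $A_4$), and each case is closed with specific structural arguments.

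Since you never obtain an a priori bound on $|G|$, Steps 5--7 and the whole $(3,4,4)$ part have nothing to stand on. There is no ``short list'' of candidates. The small-genus classifications of Broughton and Bogopolski, or the small groups library, cannot be invoked either: a $(3,3,n)$-action of a group of order $N$ has genus $1+N(n-3)/(6n)$, which is unbounded. Likewise, for $(3,4,4)$ the Riemann--Hurwitz equation for a Sylow $3$-subgroup $P$ reads $2h-2+\frac{2}{3}t=[G:P]/6$ with $[G:P]$ unknown. In Theorem \ref{arbitrary} the analogous step worked only because $|G|=2^3 3^3$ had already been pinned down.

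Two smaller points:
\begin{itemize}
\item Lemma \ref{perfect gpnf} does not cover type $(3,4,4)$. Non-perfectness should instead come from Burnside's $p^aq^b$-theorem, since the gpnf property forces $|G|=2^x3^y$.
\item If $3\mid n$ and $c\notin G'$, one of $a,b$ may lie in $G'$ (e.g.\ $\bar a=1$, $\bar b=\bar c^{-1}$). Then Lemma \ref{one cyclic} with $c^3$ does not apply, although for $n=6,9$ Burnside gives solvability anyway.
\end{itemize}
On the positive side, your use of Lemma \ref{one cyclic} to get solvability when $3\nmid n$ (in particular $n=10$) is correct. It is simpler than the paper's route through the classifications of Walter and of Fletcher--Stellmacher--Stewart.
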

\begin{proof}
To begin, observe that, by Burnside {\it pq}-theorem, all groups listed in the lemma,
except $(3,3,10)$-group, should  be solvable. The Sylow $2$-subgroup of a gpnf   $(3,3,10)$-group is
elementary abelian, and $G$ has no elements of order $6$.  Therefore, $G$ is  not simple,
otherwise, by Lemmas \ref{elementary abelian_2_groups} and \ref{elements of order 6},
$G\cong {\rm PSL}(2,p^m)$, where $p \in \{2,\, 3,\, 5\}$.
Next according to Satz II.8.5. from \cite{Huppert}, the set of orders of maximal cyclic
subgroups of the group  ${\rm PSL}(2,p^m)$ is equal to
$\{2, 2^m-1, 2^m+1 \}$ for $p=2$ and $\{p, (p^m-1)/{2}, (p^m+1)/{2}\}$
for  $p$ odd.  This set has three  elements, while the similar set for our
group $G$  consists of two elements $3,10$, due to the gpnf property. This contradiction proves
that $G$ is not simple and hence it contains a nontrivial and proper normal subgroup $H$.
Again due to the gpnf property, $H$  does not have elements of order  $3$ and so it is
solvable by the mentioned Burnside Theorem.
Next, $H$ contains an element of order $2,5$ or $10$. In the last case $G/H=Z_3$
and so $G/H$ is solvable. If $H$ contains an element of order $p\in \{2,5\}$,
then it  contains  all elements of  order $p$, so $G/H$  does not contain  an element
of order $p$. This in turn means that  $G/H$ is solvable by Burnside Theorem once more.
Summing up,  in any case,  $H$ and $G/H$ are solvable, and so  $G$ is solvable as well.

Assume now that $G$ is a gpnf   $(3,3,p)$-action for  $p\in\{5,7,11\}$.
Then $G/G'\cong Z_3$,  and $G'=Z_p^s$ by the gpnf property of the action of  $G$.
 Now  $G$  acts by conjugation on the set of cyclic subgroups of $G'$ and,
the gpnf property implies that such an action has only one orbit $G_H$,
where $H=\langle z \rangle$. However, since $G'$ is abelian
and $G/G'\cong Z_3$,  the stabilizer of $H=\langle z \rangle$ coincides with
$G$ or  it   has  index  $3$ there. Therefore, the orbit $G_H$ of $H$ has cardinality $1$ or $3$.
The latter is impossible since
$s>1$ and so $G'$ has at least $(p^2-1)/(p-1)=p+1$ cyclic subgroups while the
former means that $G'$  is cyclic as claimed. It is not central since
otherwise $G$ would have an element of order $3p$.  Hence, we have a nontrivial action
$Z_3 \to {\rm Aut}(Z_p)\cong Z_{p-1}$, and so $p=7$; the mentioned group of order $21$
is the only one that appears here.

Now let $G$ be a gpnf  $(3,3,4)$-action.
Then an elementary abelian normal subgroup $K$ of $G$ is equal to $Z_2^s$,
and so $G/K$ is a $(3,3,2)$-group. Therefore, $G/K=A_4$ since this is the only
$(3,3,2)$-group. Furthermore,  $K$ contains all subgroups of order $2$ in $G$,
and  $G$, acting  on  them by conjugation, has just one orbit $G_H$, where $H=\langle z^2 \rangle$.
But the normalizer of $H$ in $G$ has index $1,2,3$ or $6$ since it contains $z$.
In the first case $xz^2$ is an element  of order $6$ in $G$, while in the remaining
cases we have $2^s-1=2,3$ or $6$.  So $s=2$ and in particular $G'$  is a group of order $16$.
Now $G'$ cannot be abelian since then   $G'=Z_4^2$  and it has six cyclic subgroups
of order $4$. However, on the one hand, all of them are conjugated in $G$ by the gpnf property,
while on the other hand, the orbit of this action has cardinality  $3$ since $G'$
is a normal subgroup of $G$ of index $3$, which is a contradiction.
So assume that $G'$ is non-abelian.  Then it has a central (in $G'$) element of
order $2$. But since all elements of order $2$ are conjugated in $G$, all of them
are central in $G'$ and therefore  $K=Z_2^2$  is a central subgroup of $G'$. But then,
$K'\cong Z_2$ since  $G'/Z(G')$ is an abelian group of order $4$. Now $K'$,  being characteristic
in $K$, is normal in $G$ and then central in $G$, which is the final contradiction.
The above also show that  there are no gpnf $(3,3,8)$-groups since for such $G$, $G/K$,
for elementary abelian $2$-subgroup of $G$,  is a gpnf $(3,3,4)$-group, which was just ruled out.

A gpnf  $(3,3,9)$-action of a group $G$ is a  non-abelian   $3$-group and therefore it
contains a normal subgroup $H$ of index $9$. So $G/H=Z_3^2$ which is a contradiction since
$Z_3^2$ is not a union of three cyclic groups.

Next, let   $G$ be a gpnf $(3,3,6)$-action. Then $G/G'=Z_3$. If
$G'$ is abelian, then it is a union of three cyclic groups, and  due to
Lemma \ref{union of cyclics}, it is isomorphic to $Z_2^2 \times Z_3$.
However, this means that a  Sylow $2$-subgroup  $S$  of $G'$, being characteristic in $G'$,
is normal in $G$ and therefore $G/S$ is a noncyclic group of order $9$, which is a contradiction.
So  $G'$
is non-abelian and  $G'/G''$ is a  $2$-group, being a union of three subgroup of order $2$.
 Thus $G'/G'' \cong Z_2^2$, $G/G'' \cong A_4$  and  $G''$  is an elementary abelian $3$-group.
 If  $G''$
is cyclic,  then  $G'=D_3$, which means that certain  elements of $G'$ of order $2$  do not belong
to subgroups of $G$ of  order $6$ .
So $G''$ is not cyclic, say $G''  \cong Z_3^s$, and therefore it has $(3^s-1)/2$  cyclic subgroups.
However, by the gpnf property of $G$    all of them are conjugated in $G$,
while from another perspective, there are $3$ or $6$  of them since the normalizer
of an arbitrary cyclic subgroup of $G''$  has index $3$ or $6$  in $G$. A contradiction.

Now let $n=10$ and let $G$ be a gpnf  $(3,3,10)$-group.
Then $G/G'=Z_3$ and thus    $G'$
is a $(10,10,10)$-group,  by  the Lemma \ref{sing}.
So if
$G'$ is abelian, then it has order $5^s 2^t$ for $s,t \leq 2$. But then a Sylow
$5$-subgroup $P=Z_5^s$ of $G'$ is normal in $G$ and $G/P \cong A_4$. Therefore
$s=2$  since otherwise $P=Z_5$ would be central in $G$, contrary to its gpnf-property.
But then we would have a faithful action of $A_4$ on $Z_5^2$. Equivalently $A_4$ would be
a subgroup of ${\rm GL}(2,5)$ which is well known not to be the case.  For non-abelian
$G'$, $G''$ is a Sylow $p$-subgroup of $G'$ for $p=2$ or $p=5$, due to the gpnf property
of $G$. But then we get a contradiction in the same way since $G'/G''$ has the rank $2$.

To complete the proof
consider a gpnf $(3,4,4)$-group $G$. Then $G/G'$ is either $Z_2$ or $Z_4$.
But in the first case, all elements of $G'$
have orders $2$ or $3$. Therefore, a Sylow $2$-subgroup $H$ of $G'$ is elementary abelian.
The subgroup $H$ is obviously normal in $G$, $G/H\cong D_3$ and no non-identity element of $H$
is central because $G$ does not have elements of order $6$. Now $G$,  acting on $H$ by
conjugation, has at most two orbits, and since any element of $G$ of order $2$ is a square of
an element of order $4$, we see that these orbits have cardinality $3$. So all elements of
order $2$ form exactly one orbit and then $|H|=4$, otherwise $H$ consists of $6$ elements of
order $2$. This implies that $|G|=24$ and the set $G-G'$, consisting of all elements of order
$4$, is distributed among $3$ Sylow $2$-subgroups. Thus a Sylow $2$-subgroup of $G$ has to
be isomorphic to $\mathbb{Z}_4\times\mathbb{Z}_2$ because it is the unique (up to isomorphism)
noncyclic group of order $8$ with exactly $4$ elements of order $4$. The subgroup $H$ is the
intersection of all Sylow $2$-subgroups of $G$ since it is normal in $G$, and then it centralizes
all elements of order $4$. This is a contradiction because it would mean that $H$ is contained
in the center of $G$, as it is generated by elements of order $4$.

So $G_{\rm ab}= Z_4$ and $G'$ is an elementary abelian $3$-group, say of order $3^s$. Now,
$G$ acting on the set of its cyclic subgroups of $G'$, has one orbit which has $1,2$ or
$4$ elements. Therefore
$3^s-1= 2,4$ or $8$,  and thus  $s=1$ or $s=2$. But for $s=1$, $|G|=12$, which is impossible since on the
one hand, among the three non-abelian groups of  order $12$, only dicyclic $Q_6$
has abelianization $Z_4$, while on the other hand, a dicyclic group  cannot be a gpnf
$(3,4,4)$-group since it has an element of order $6$.
Also $s\neq 2$ because otherwise,  $y$
permutes cyclic subgroups of order $3$, forming a $4$-cycle. But then, $y^2$ splits into
two disjoint transpositions, which is impossible since an element of order $2$ acting as
an automorphism on $Z_3^2$ either inverts all elements and so permutes cyclic
subgroups trivially or has fixed points, and then $G$ has elements of order $6$, which gives a
contradiction again.
\end{proof}

Summing up we get as a corollary the following.

\begin{thm}\label{form of mu}
Given an even  $g \geq 2$,  we have either
\begin{itemize}
\item[(a)]
$\mu(g) = 8g$ or\\[-3mm]
\item[(b)]
$\displaystyle{\frac{8n}{n-4}}(g-1)$
for some $n$
\end{itemize}
The case $(a)$ applies for infinitely many genera. For arbitrary $n$,  the case $(b)$ applies
for at most finitely many $g$.
 \end{thm}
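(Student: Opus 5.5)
We will argue by enumerating the signatures of Fuchsian groups $\Delta$ that can possibly realise a gpnf-action of order $\mu(g)\geq 8g$ for even $g$. By Lemma \ref{geq 8g}, $\mu(g)\geq 8g$. If $\mu(g)>8g$, then by (\ref{area}) and (\ref{RH}) we have $|G|=4\pi(g-1)/m(\Delta)>8g>8(g-1)$. This forces $m(\Delta)<\pi/2$. As in the first part of the proof of Lemma \ref{between 8g and 8g+8}, this excludes nonzero orbit genus and $r\geq 5$. For $r=4$ only $(2,2,2,3)$ survives, and it gives $12(g-1)$. We would exclude it by the gpnf property: a gpnf $(2,2,2,3)$-group is ruled out via Lemma \ref{simple gpnf} and Lemma \ref{perfect gpnf}, or more simply because its abelianization must be covered by images of cyclic subgroups, following the pattern of the proof of Theorem \ref{arbitrary}. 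So $\Delta$ is a triangle group $(k,m,n)$ with $k\leq m\leq n$ and $1/k+1/m+1/n>3/4$ (roughly). The admissible triples are $(2,3,n)$, $(2,4,n)$, $(2,5,n)$ for $n\leq 19$, a few $(2,6,n)$, $(3,3,n)$ for $n\leq 11$, and $(3,4,4)$.

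Next we eliminate all of these except $(2,4,n)$. The $(2,3,n)$ case is Theorem \ref{arbitrary}. The $(3,4,4)$ case and $(3,3,n)$ with $n\neq 7$ are Lemma \ref{33n & 344}. The group $\langle a,b\rangle$ of order $21$ of type $(3,3,7)$ acts on genus $1$ only, since $(3,3,7)$-surfaces have $g=1+|G|/42$, which gives $g=3/2$. So in fact it yields no action, or only an odd-genus action, and is irrelevant. For the finitely many triples $(2,5,n)$, $(2,6,n)$, $(2,7,7)$, etc., we would check directly that $|G|=4mn(g-1)/((m-2)(n-2)-4)$ cannot exceed $8g$ while meeting the gpnf restrictions. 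If this check fails, we would handle these triples with an abelianization argument: $G_{\rm ab}$ must be a gpnf group of the same type and hence is covered by at most three cyclic images. Lemma \ref{union of cyclics} then restricts the possibilities, as in the proof of Theorem \ref{arbitrary}. This leaves $(2,4,n)$, where $|G|=\frac{4n}{n-4}(g-1)$. Writing the period as $2n'$ (the even case; odd $n$ gives $G_{\rm ab}$ issues similar to Lemma \ref{8g for odd}) puts the order in the form $\frac{8n}{n-4}(g-1)$ of (b).

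For the two remaining assertions: case (a) holds for infinitely many $g$ by Theorem \ref{A-M for even gpnf}. For fixed $n$, a group in case (b) is a $(2,4,n)$-group, hence a quotient of the triangle group, and every such group has exponent dividing $\mathrm{lcm}(4,n)$ because all its elements are conjugate to powers of the generators by the gpnf property. It is also $2$-generated. By Zelmanov's solution of the Restricted Burnside Problem \cite{Z1,Z2}, finite $2$-generated groups of fixed exponent have bounded order. Therefore $g-1=|G|(n-4)/(8n)$ is bounded, and only finitely many $g$ occur.

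The main obstacle is the middle step: the finite list of hyperbolic triples $(2,m,n)$ with $m\geq5$ and orders slightly above $8(g-1)$, which earlier lemmas do not cover. There, for large $g$, one checks that $4mn/((m-2)(n-2)-4)\leq 8$ except for a handful of cases, such as $(2,5,5)$, $(2,5,6)$, $(2,5,7)$, $(2,5,8)$, $(2,5,9)$ and $(2,6,6)$. Each of these must be killed by a gpnf/prime-graph argument in the style of Lemma \ref{33n & 344}.
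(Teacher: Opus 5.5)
Your overall architecture matches the paper's. You use a Riemann--Hurwitz reduction of the possible signatures, Theorem \ref{arbitrary} for $(2,3,n)$, Lemma \ref{33n & 344} for $(3,3,n)$ and $(3,4,4)$, Theorem \ref{A-M for even gpnf} for the infinitude in (a), and Zelmanov's theorem (two generators, exponent dividing $\mathrm{lcm}(4,n)$) for the finiteness in (b). Those last two steps are fine.

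\textbf{The gap.} The genuine gap is the one you flag yourself, and it is wider than your last paragraph says. The condition $|G|>8(g-1)$ reads $1/k+1/m+1/n>3/4$, which does not involve $g$, so ``for large $g$'' removes nothing. It admits:
\begin{itemize}
\item all $(2,5,n)$ with $5\le n\le 19$;
\item all $(2,6,n)$ with $6\le n\le 11$;
\item $(2,7,n)$ with $7\le n\le 9$;
\item the triple $(3,4,5)$, with coefficient $120/13$, which is missing from your list;
\item the quadrilateral signature $(2,2,2,3)$, with coefficient $12$.
\end{itemize}
Each of these gives $|G|>8g$ once $g>c/(c-8)$, where $c$ is the coefficient.

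Outside the special genera $g=89p+1$ of Lemma \ref{between 8g and 8g+8}, none of these signatures is treated anywhere in the paper. Your fallbacks cannot treat them either:
\begin{itemize}
\item Lemmas \ref{simple gpnf} and \ref{perfect gpnf} concern only simple or perfect groups of types $(2,3,n)$ and $(p,\dots,p,n)$.
\item The abelianization of a $(2,2,2,3)$-group is a quotient of $Z_2^2$, which is covered by the images of $x_1,x_2,x_3$.
\item A $(3,4,5)$-group is perfect, and a $(2,5,5)$-group has cyclic abelianization generated by the image of $y$.
\end{itemize}
So an abelianization test produces no contradiction in these cases. Each signature needs a real exclusion argument of the kind given in Lemma \ref{33n & 344}.

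The paper's own proof only asserts that the reduction follows ``in the obvious manner'' from Theorem \ref{arbitrary}, Lemma \ref{33n & 344} and Riemann--Hurwitz. That leaves exactly the same signatures untreated. So you have located a real hole rather than missed an available tool, but your proposal does not fill it.

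\textbf{A partial repair.} A gpnf $(k,m,n)$-group has at most $k+m+n-2$ conjugacy classes: the identity plus the nontrivial powers of the three generators. By Landau's theorem, each of the finitely many leftover signatures therefore carries only finitely many gpnf groups. This proves the dichotomy for all but finitely many even $g$, and it also gives the finiteness in (b) without Zelmanov. It does not prove the dichotomy for every even $g$ as stated.

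\textbf{Two smaller slips.} First, for $(2,4,n)$, formula (\ref{2mn}) with $m=4$ gives $16n/(2n-8)=8n/(n-4)$ directly. The $4n/(n-4)$ in the proof of Lemma \ref{between 8g and 8g+8} is a misprint. Your substitution $n=2n'$ is wrong algebra, since it yields $4n'/(n'-2)$, and no parity condition on the third period is needed. Second, for $(3,3,7)$, Riemann--Hurwitz gives $g-1=2|G|/21$, so the group of order $21$ acts on genus $3$, not on genus $1$ or ``$3/2$''. It is irrelevant to the theorem only because $3$ is odd.
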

\begin{proof}
The first part follows, in the obvious manner,  from the conjunction of
Theorem \ref{arbitrary}, Lemma  \ref {33n & 344}     and the Riemann-Hurwitz formula.
The assertion  (a) follows from Theorem \ref{A-M for even gpnf}. Part (b)
follows from the  previously mentioned Zelmanov's solution of Restricted Burnside Problem
on  a bound $B(k,e)$ for the  order of a finite
$k$-generated group of exponent $e$ (in our case $k=2, e=4n$);
the point here is  that Zelmanov's results do not permit to apply for gpnf-actions, the
 Macbeath trick used in \cite{McB} for unlimited reproduction of  Hurwitz actions.
\end{proof}

In principle, case $(b)$ may occur for infinitely many values of $n$.
However, we conjecture that this cannot be the case.

\begin{conject}\label{mu=8g for arbitrary g} \rm
$\mu (g)= 8g$ for all but finitely many $g$ (likely for all $g$).
\end{conject}

\begin{thm}\label{derivative}
The set $\mathcal{M}_+^d$  of the accumulation points
of the set
$$\mathcal{M}_+=  \left\{  \displaystyle{\frac{\mu(g)}{g+1}} : g \;  {\rm even} \right\}$$
consists of the single number $8$.
\end{thm}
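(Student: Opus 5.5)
The plan is to show two things: that $8$ really is an accumulation point of $\mathcal{M}_+$, and that no other real number is one.

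\emph{$8$ is an accumulation point.} By Theorem \ref{A-M for even gpnf} there are infinitely many even $g$ with $\mu(g)=8g$. These include $g=89p+1$ for admissible primes $p$. For such $g$,
$$\widetilde\mu(g)=\frac{8g}{g+1}=8-\frac{8}{g+1}.$$
These values are pairwise distinct and tend to $8$ as $g\to\infty$, so $8\in\mathcal{M}_+^d$.

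\emph{No other accumulation point.} By Theorem \ref{form of mu}, every even $g$ has either $\mu(g)=8g$ or $\mu(g)=\frac{8n}{n-4}(g-1)$ for some integer $n>4$. Write $c_n=\frac{8n}{n-4}$, so $c_n>8$ and $c_n\to 8$ as $n\to\infty$. Accordingly every value of $\mathcal{M}_+$ has the form
$$\frac{8g}{g+1}\quad\text{or}\quad c_n\,\frac{g-1}{g+1}.$$
Now let $a$ be the limit of a sequence of pairwise distinct values $\widetilde\mu(g_k)$. We may assume the genera are distinct; this is automatic, since $g$ determines the value.

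Pass to a subsequence lying entirely in one of the two types. If it lies in type (a), then $g_k\to\infty$ and the values tend to $8$. If it lies in type (b) with parameters $n_k$, two subcases arise.
\begin{itemize}
\item If $n_k$ takes some value $n$ infinitely often, then Theorem \ref{form of mu} says case (b) occurs for only finitely many $g$ with that $n$. This contradicts the infinitely many distinct $g_k$. So after passing to a subsequence, $n_k\to\infty$, and hence $c_{n_k}\to 8$.
\item Since the $g_k$ are distinct, $g_k\to\infty$, so $(g_k-1)/(g_k+1)\to 1$.
\end{itemize}
Hence $a=\lim c_{n_k}(g_k-1)/(g_k+1)=8$. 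In every case $a=8$.

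\emph{Main obstacle.} The delicate point is the reliance on the finiteness assertion of Theorem \ref{form of mu}: for each fixed $n$, case (b) holds for only finitely many $g$. This is exactly the step that excludes the potential accumulation points $c_n$. I would invoke that theorem as stated. I would also check that the set of $n$ occurring in case (b) needs no further control, because the argument only uses that $c_n$ is a function of $n$ converging to $8$.

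A minor point is that $\mu(g)$ is finite by the Hurwitz bound, so $\mathcal{M}_+\subseteq[8-\epsilon,\,84]$ for a small $\epsilon>0$. This guarantees that every sequence has a convergent subsequence, so the subsequence extractions above are legitimate.
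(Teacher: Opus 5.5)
Your proof is correct and follows the paper's argument. The paper also separates $G_0=\{g:\mu(g)=8g\}$ from the sets $G_n$, and uses two facts: each $G_n$ is finite, and the $G_n$ are pairwise disjoint, so large $n$ forces large $g$. These are exactly your $n_k\to\infty$ and $g_k\to\infty$ steps in sequence form.

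Your closing boundedness remark is harmless but unnecessary. You start from a sequence already converging to $a$, and none of your subsequence extractions needs compactness.
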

\begin{proof}
Let
$
G_0= \left\{g \, : \, \mu(g)=8g\right\} \; {\rm and} \; G_n=\left\{ g \, : \, \mu (g) = \displaystyle{\frac{8n}{n-4}}(g-1)\right\}, \; n \geq 5.
$

\medskip  \noindent
By Theorem  \ref{A-M for even gpnf},  $G_0$ is  infinite while   by (b)
of Theorem \ref{form of mu},  $G_n$ is finite for all $n \geq 5$.
Then let
$$
\mathcal{M}_0= \left\{ 8\,\frac{g}{g+1} \, : \, g \in G_0 \right\}, \;\;  \mathcal{M}_n=\left\{\left(\frac{8n}{n-4}\right)\left( \frac{g-1}{g+1}\right)\,:\, g \in G_n\right\}
$$
Clearly $(\mathcal{M}_0)^d=\{8\}$ and we have to look for the accumulation points of the union
of $\mathcal{M}_n$ for $n\geq 5$.
So  let $N_n = \max \mathcal{M}_n$.
Then, since the sets $G_i$ are disjoint,  given $\varepsilon > 0$, there is $N_\varepsilon$  so that
for $n\geq N_\varepsilon$,
$8-\varepsilon \leq N_n \leq 8+\varepsilon$.
Therefore, keeping in mind that the union
$$
\bigcup_{0<n<N_\varepsilon} \mathcal{M}_n
$$
is finite we get the assertion.
\end{proof}

\begin{rk}\rm
Finally, if Conjecture \ref{conj 4g} is true, then the set  $(\mathcal{M}_{-})^d$ of the
accumulation points of
\begin{equation}\label{mathcal M odd}
\mathcal{M}_{-} = \left\{  \displaystyle{\frac{\mu(g)}{g+1}} : g\geq 3,  \; g \;  {\rm odd} \right\}
\end{equation}
is composed of the single value $4$ which in turn gives   $  \mathcal{M}^d =\{4,8\}$.
\end{rk}

\bigskip \noindent
{\bf Acknowledgements.} The authors would like to thank Natalia Maslova  and
Alireza Abdollahi for bringing the paper \cite{VV1} of A.V. Vasiliev and   E.P. Vdovin, to our
attention.
Part of the contribution of the second author was made during the workshop
{\it Symmetries of Surfaces, Maps and Dessins} at BIRS (Banff International Research Station)
in Canada, organized by Marston Conder  in  September 2017 and during his  numerous visits
to UNED in Madrid, partially supported by Spanish grants.


\end{document}